\documentclass[11pt]{article}

\usepackage[T1]{fontenc}
\usepackage[utf8]{inputenc}
\usepackage{amsmath,amssymb,amsthm,mathtools}
\usepackage[a4paper,margin=2.55cm]{geometry}
\usepackage{microtype}
\usepackage[colorlinks=true,linkcolor=blue,citecolor=blue,urlcolor=blue]{hyperref}
\hypersetup{
  pdftitle={Sharp Asymptotics for Abelian Covers of Groups with Bounded Noncommutativity},
  pdfauthor={Guillaume Lecomte}
}
\usepackage{enumitem}

\newtheorem{theorem}{Theorem}[section]
\newtheorem{lemma}[theorem]{Lemma}
\newtheorem{proposition}[theorem]{Proposition}
\newtheorem{corollary}[theorem]{Corollary}
\theoremstyle{definition}
\newtheorem{definition}[theorem]{Definition}
\theoremstyle{remark}
\newtheorem{remark}[theorem]{Remark}

\newcommand{\F}{\mathbb{F}}
\newcommand{\Aut}{\operatorname{Aut}}
\newcommand{\rad}{\operatorname{rad}}
\newcommand{\rk}{\operatorname{rk}}
\newcommand{\Tr}{\operatorname{Tr}}

\title{Sharp Asymptotics for Abelian Covers of Groups with Bounded Noncommutativity}
\author{Guillaume Lecomte\thanks{Corresponding author: \href{mailto:guillaume.lecomteexed@edu.executive.em-lyon.com}{guillaume.lecomteexed@edu.executive.em-lyon.com} (Paris, France).\\ORCID: 0009-0003-3532-3379.}}
\date{20 August 2026}

\begin{document}
\maketitle

\begin{abstract}
We determine the sharp exponential growth rate of the minimum number of abelian subgroups required to cover a group with bounded pairwise noncommutativity. Let $\omega(G)$ denote the largest size of a pairwise noncommuting subset of a group $G$, let $a(G)$ be the least size of an abelian cover, and define $h(n)$ as the supremum of $a(G)$ over groups satisfying $\omega(G)\le n$. Answering a quantitative question posed by Erd\H{o}s, we prove that
\[
\log_2 h(n)=\frac n2+O\!\left(\sqrt n\,(\log(n+2))^3\right),
\]
and hence that $h(n)^{1/n}$ tends to $\sqrt2$. Extraspecial $2$-groups provide the matching lower bound at the exponential scale. The upper bound combines a central-factor analysis of finite $p$-groups with alternating-form clique estimates, interaction control across central factors, Sylow decomposition, and a quasipolynomial-cost reduction through the centralizer of the derived subgroup. The argument also shows that asymptotic extremality is concentrated in $2$-groups and yields the same sharp exponential rate for the minimum possible index of an abelian subgroup. In graph-theoretic terms, the result determines the sharp asymptotic chromatic-versus-clique growth rate for noncommuting graphs of groups. This result resolves Erd\H{o}s Problem \#117 at the level of its sharp exponential asymptotics.
\end{abstract}

\noindent\textbf{Keywords.} noncommuting graph; abelian cover; finite groups; $p$-groups; Erd\H{o}s problems.

\medskip
\noindent\textbf{Mathematics Subject Classification (2020).} 20D60; 20D15; 05C15; 05C69.

\section{Introduction}

A natural quantitative measure of noncommutativity is the largest cardinality $\omega(G)$ of a pairwise noncommuting subset of a group $G$. A complementary covering parameter is the least number $a(G)$ of abelian subgroups whose union is $G$. We study the extremal relation
\[
h(n)=\sup\{a(G):\omega(G)\le n\}.
\]
The quantitative covering question was posed by Erd\H{o}s in his collections of unsolved problems \cite{Erdos1990,Erdos1997}, but its graph-theoretic and group-theoretic antecedents go back considerably further. B.~H. Neumann proved that finiteness of a largest pairwise noncommuting set forces $G/Z(G)$ to be finite \cite{Neumann1954}, and later returned explicitly to Erd\H{o}s's question \cite{Neumann1976}. Erd\H{o}s and Straus \cite{ErdosStraus1976} investigated complementary quantitative measures of how far finite groups are from being abelian. Faber, Laver and McKenzie \cite{FaberLaverMcKenzie1978} formulated the associated noncommutation graph and studied its chromatic number in the language of coverings by abelian subgroups. Mason \cite{Mason1978} and Bertram \cite{Bertram1983} developed further connections between pairwise noncommuting sets, graph colouring and abelian covers. Pyber \cite{Pyber1987} then established the first uniform exponential bounds, proving that
\[
c_1^n<h(n)<c_2^n
\]
for absolute constants $c_2>c_1>1$. Erd\H{o}s \cite{Erdos1997} notes that the lower bound was already known to Isaacs. Thus the exponential scale was known, but its base was not determined.

The problem has an exact graph-theoretic formulation. The noncommuting graph $\Gamma_G$ has vertex set $G\setminus Z(G)$, with adjacency defined by noncommutation. Then $\omega(G)=\omega(\Gamma_G)$ and $a(G)=\max\{1,\chi(\Gamma_G)\}$. The later literature developed both sides of this correspondence. Brown determined asymptotic and exact separation phenomena for minimal abelian covers and maximal pairwise noncommuting sets in symmetric groups \cite{Brown1988,Brown1991}. Chin studied noncommuting sets in extraspecial $p$-groups \cite{Chin2005}. Abdollahi, Akbari and Maimani initiated a systematic study of noncommuting graphs \cite{AbdollahiAkbariMaimani2006}, while Azad and Praeger and subsequent authors obtained exact or structural results for particular linear and $p$-group families \cite{AzadPraeger2009,AzadIranmaneshPraegerSpiga2011,FouladiOrfi2011,FouladiOrfi2013,DarafshehGhorbaniPrajapati2015}. Quantitative relations between finite abelian coverings and derived groups were investigated by Podoski and Szegedy \cite{PodoskiSzegedy2002}. More general subgroup-covering questions form a substantial parallel literature \cite{Berkovich2010}, and recent work has also considered covers by centralizers \cite{LewisMcCulloch2026}.

These results are complementary to the present problem: they concern structural consequences, particular families, or other covering parameters. The question considered here is uniform over all groups and asks for the extremal dependence of the abelian covering number on the maximum size of a pairwise noncommuting set. The purpose of the present paper is to determine the exponential rate left open by the bounds of Pyber.

We prove the quantitative estimate
\[
\log_2 h(n)=\frac n2+O\!\left(\sqrt n\,(\log(n+2))^3\right),
\]
and hence $h(n)^{1/n}\to\sqrt2$. Extraspecial $2$-groups yield the lower bound. For the upper bound, isoclinism reduces the problem to finite groups. Two global estimates control conjugacy classes and the derived subgroup, using Pyber's 1987 BFC lemma and the bound of Neumann and Vaughan-Lee \cite{NeumannVaughanLee1977}.

It is perhaps worth noting that Pyber, in the concluding remarks of his 1987 paper, suggested that replacing the index of the centre by the index of a maximal abelian subgroup should lead to substantially sharper bounds, writing that ``probably $|G:A|\le 2^{4n}$ follows'' and leaving the details to the interested reader \cite{Pyber1987}. The present work may be viewed, in part, as carrying this suggestion to its asymptotic endpoint: at the level of abelian coverings, the optimal exponential constant is $1/2$.

The main new ingredient is the $p$-group analysis. A central series of $P'$ produces scalar alternating forms whose ranks measure the cost of a recursive abelian cover. Weak interaction between different levels is absorbed by a nested-anchor construction, whereas strong interaction forces a multiplicative noncommuting set after exact centralization. This gives coefficient $1/2$ for $p=2$ and strict asymptotic slack for odd primes. Sylow decomposition then handles nilpotent groups. Finally, for an arbitrary finite group $G$ we pass to the normal subgroup $H=C_G(G')$. Since $H'\le Z(H)$, this subgroup is nilpotent, while the Neumann--Vaughan-Lee estimate implies
\[
\log_2 [G:H]=O((\log N)^4).
\]
A domination argument on $H$-cosets then extends the estimate to $G$ at quasipolynomial cost.

\section{Statement and finite reduction}

This section fixes the graph-theoretic interpretation of the two parameters and removes the only infinitary issue in the problem. The key point is that both $\omega(G)$ and $a(G)$ depend only on the commutator structure modulo the centre. Isoclinism therefore permits a reduction to finite stem representatives without changing either quantity.

For an arbitrary group $G$, let $\Gamma_G$ be its noncommuting graph, with vertex set $G\setminus Z(G)$. Then $a(G)=\max\{1,\chi(\Gamma_G)\}$: for a nonabelian group, a commuting colour class generates an abelian subgroup, while an abelian cover gives a colouring; central elements may be inserted into every member of the cover. If $G$ is abelian, then $\Gamma_G$ has no vertices and $a(G)=1$. Thus the problem is a chromatic-versus-clique problem for the graph class $\Gamma_G$.

\begin{lemma}[Finite reduction by isoclinism]\label{lem:finite-reduction}
Let $G$ be a group with $\omega(G)<\infty$. Then there is a finite group $H$ isoclinic to $G$ such that
\[
\omega(H)=\omega(G),\qquad a(H)=a(G).
\]
Consequently,
\[
h(n)=\sup\{a(H):H\text{ finite and }\omega(H)\le n\}.
\]
\end{lemma}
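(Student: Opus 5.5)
We first use Neumann's theorem \cite{Neumann1954}: $\omega(G)<\infty$ forces $[G:Z(G)]<\infty$. Schur's theorem then gives $|G'|<\infty$. Next we take a stem group $H$ in the isoclinism class of $G$, meaning $Z(H)\le H'$. Such a group always exists: by P.~Hall, every isoclinism class contains a stem group. For this $H$ we have $H/Z(H)\cong G/Z(G)$, which is finite, and $H'\cong G'$, which is finite. Hence $|H|\le [H:Z(H)]\cdot|H'|<\infty$.

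Next we show that $\omega$ and $a$ are isoclinism invariants. Let $\phi:G/Z(G)\to H/Z(H)$ and $\psi:G'\to H'$ be compatible isoclinism isomorphisms, so that $\psi([x,y])=[x',y']$ whenever $x'Z(H)=\phi(xZ(G))$ and $y'Z(H)=\phi(yZ(G))$. Whether $x$ and $y$ commute depends only on their cosets modulo the centre, since $[x,y]$ depends only on $xZ$ and $yZ$. It follows that $x,y$ commute if and only if $\psi([x,y])=1$, if and only if lifts of the corresponding cosets in $H$ commute.

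\emph{Cliques.} A pairwise noncommuting subset lies in distinct cosets of the centre, because two elements in the same coset commute. Such a set therefore corresponds to a set of cosets of $G/Z(G)$ that pairwise fail to commute. Since $\phi$ preserves this relation, $\omega(G)=\omega(H)$.

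\emph{Covers.} An abelian cover $A_1,\dots,A_k$ of $G$ may be replaced by $A_iZ(G)$, which are still abelian. The images $\phi(A_iZ(G)/Z(G))$ are then subgroups of $H/Z(H)$ whose elements have pairwise commuting lifts, because commutation is preserved. Their full preimages in $H$ are therefore abelian subgroups covering $H$. The argument is symmetric, so $a(G)=a(H)$.

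For the consequence, the inequality $\ge$ is trivial. For $\le$, any $G$ with $\omega(G)\le n$ is replaced by a finite $H$ with the same values of $\omega$ and $a$. The case $\omega(G)=0$ causes no trouble, since then $G$ is abelian and $H=1$ works.

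The main obstacle is the existence of the finite stem representative. If citing Hall's stem-group theorem is considered too heavy, there is a fallback construction that I am less sure of. Since $G/Z(G)$ is finite, choose a finite transversal generating $K\le G$ with $KZ(G)=G$; then $K\cap Z(G)\le Z(K)$. Because $G/Z(G)$ is finite, $K$ is finitely generated, and then $[K:Z(K)]<\infty$ makes $Z(K)$ a finitely generated abelian group. Quotient $K$ by a subgroup $N\le Z(K)$ of finite index with $N\cap K'=1$; such an $N$ exists because $K'$ is finite and $Z(K)$ is residually finite. One then checks that $K/N$ is finite and isoclinic to $K$, and that $K$ is isoclinic to $G$. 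Both checks use that $G=KZ(G)$ gives $K'=G'$ and $Z(K)=K\cap Z(G)$.
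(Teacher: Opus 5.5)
Your proposal is correct and follows the paper's proof essentially step for step. Neumann's and Schur's theorems give finiteness of $G/Z(G)$ and $G'$; Hall's stem group $H$ is then finite because $Z(H)\le H'$; and both parameters transfer through the isoclinism because commutation depends only on central cosets. The paper phrases this transfer through a graph $\Delta_G$ on $G/Z(G)$ with $a(G)=\chi(\Delta_G)$, whereas you transport the enlarged subgroups $A_iZ(G)$ directly, which is the same argument.

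Your fallback construction is also sound and needs no hedging. The key check is that $[x,k]\in N\cap K'=1$ gives $Z(K/N)=Z(K)/N$, so $K/N$ is a finite group isoclinic to $K$, and hence to $G$. This lets you avoid citing Hall's theorem, since stemness is only ever used to obtain finiteness.
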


\begin{proof}
By B.~H. Neumann's theorem \cite{Neumann1954}, finiteness of the largest pairwise noncommuting set implies that $G/Z(G)$ is finite. Hall's stem-group theorem \cite{Hall1940} (see also \cite[Proposition 3.2]{PournakiSobhani2008}) provides a group $H$ isoclinic to $G$ with $Z(H)\le H'$. Thus there are isomorphisms
\[
\alpha:G/Z(G)\longrightarrow H/Z(H),\qquad \beta:G'\longrightarrow H'
\]
that preserve commutators. Since $G/Z(G)$ is finite, Schur's theorem gives that $G'$ is finite; hence $H'$ is finite. As $Z(H)\le H'$, the centre $Z(H)$ is finite, while $H/Z(H)\cong G/Z(G)$ is finite. Therefore $H$ is finite.

It remains to check that both parameters are isoclinism invariants. Define a graph $\Delta_G$ on $G/Z(G)$ by joining two central cosets exactly when their representatives do not commute. This is well defined because multiplication by central elements does not change a commutator. A clique in $G$ contains at most one element from each central coset, and its image in $\Delta_G$ is a clique; conversely every clique of $\Delta_G$ lifts to a pairwise noncommuting subset of $G$. Thus $\omega(G)=\omega(\Delta_G)$.

For the covering number, enlarge every abelian subgroup in a cover to its product with $Z(G)$; it remains abelian. Such enlarged subgroups are unions of central cosets, and their images in $G/Z(G)$ are independent sets of $\Delta_G$. Hence an abelian cover gives a proper colouring of $\Delta_G$. Conversely, the union of the central cosets in one colour class consists of pairwise commuting elements, so the subgroup it generates is abelian. Therefore $a(G)=\chi(\Delta_G)$. The commutator-preserving isomorphism $\alpha$ is a graph isomorphism $\Delta_G\cong\Delta_H$, proving the claim.
\end{proof}

The following is the main result. It strengthens the previously known statement that $h(n)$ is merely exponential by determining the exact base of exponential growth.

\begin{theorem}\label{thm:main}
As $n\to\infty$,
\[
\log_2 h(n)=\frac n2+O\!\left(\sqrt n\,(\log(n+2))^3\right).
\]
In particular,
\[
\lim_{n\to\infty} h(n)^{1/n}=\sqrt2.
\]
\end{theorem}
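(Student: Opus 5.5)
The plan has two sides. For the lower bound I will take the extraspecial $2$-group $E=2^{1+2m}_+$. On $V=E/Z(E)\cong\F_2^{2m}$ the commutator is a nondegenerate alternating form $b$, and two elements commute exactly when their images are $b$-orthogonal. A pairwise noncommuting set gives vectors $v_1,\dots,v_k$ with $b(v_i,v_j)=1$ for all $i\ne j$. The Gram matrix $J-I$ over $\F_2$ has rank at least $k-1$, which forces $k\le 2m+1$, so $\omega(E)\le 2m+1$. In the other direction, every abelian subgroup containing $Z(E)$ maps to a totally isotropic subspace, which has at most $2^m$ elements. Covering the $2^{2m}$ vectors then needs at least $2^{m}$ such subgroups, so $a(E)\ge 2^m$. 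Hence $\log_2 h(n)\ge n/2-O(1)$.

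For the upper bound, Lemma~\ref{lem:finite-reduction} lets me restrict to finite $G$ with $\omega(G)\le N$. I will proceed in three stages: first $p$-groups, then nilpotent groups, then arbitrary groups.

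The first stage is the $p$-group estimate, which I expect to be the main obstacle. Let $P$ be a finite $p$-group with $\omega(P)\le N$. I will refine a central series of $P$ through $P'$, choosing the factors so that the commutator map, composed with a linear functional on each central factor, gives scalar alternating forms $b_1,\dots,b_s$ on suitable sections of $P/Z(P)$. Suppose a single form has rank $2r_i$ over $\F_p$. Then a maximal isotropic subspace of that form yields an abelian cover of cost $p^{r_i}$, while a symplectic basis yields noncommuting sets of size about $2r_i+1$; for odd $p$, Chin-type constructions give larger sets. Combining the levels recursively gives a cover of size $\prod_i p^{r_i}$, up to overhead.

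The difficulty is that noncommuting sets at different levels do not simply add. My plan is to split the level pairs by interaction strength. When the interaction between levels is weak, I will build a nested anchor set: I fix representatives at the higher levels that centralize the lower-level witnesses, so the clique sizes add up to losses of order $\sqrt N$. When the interaction is strong, centralizing exactly produces a product-type noncommuting set, whose size grows multiplicatively in the ranks. Since that size is bounded by $N$, the number of strongly interacting levels is forced to be $O(\log N)$. Balancing a threshold of about $\sqrt N$ between the two regimes should give
\[
\log_2 a(P)\le \tfrac N2+O(\sqrt N(\log N)^{c}).
\]
For odd $p$, the cost per unit of clique is $\tfrac{\log_2 p}{2}\cdot\tfrac{2}{p+1}$ or similar, which is strictly less than $1/2$; that is where the strict slack comes from.

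The second stage handles nilpotent groups via the Sylow decomposition $G=\prod P_p$. In this case $\omega(G)=\sum_p\omega(P_p)$ up to small corrections, since noncommuting sets combine coordinatewise. Also $a(G)\le\prod_p a(P_p)$. The error terms add, but only $O(\log N)$ primes have nonabelian Sylow subgroups. The third stage treats a general finite $G$. I set $H=C_G(G')$. Then $H'\le G'\cap H\le Z(H)$, so $H$ is nilpotent of class at most $2$. Pyber's BFC lemma bounds $|G'|$ by $2^{O((\log N)^2)}$-type quantities, and Neumann--Vaughan-Lee then gives $\log_2[G:H]=O((\log N)^4)$. To finish, I will cover each $H$-coset $gH$ using abelian subgroups generated by $g$ together with abelian pieces of $C_H(g)$, dominating the relevant noncommuting structure by that of $H$. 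This costs a factor $[G:H]\cdot 2^{O(\sqrt N(\log N)^3)}$, which is absorbed into the error term. Combining the lower and upper bounds gives Theorem~\ref{thm:main}.
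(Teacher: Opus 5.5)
Your lower bound is fine, and your outline has the same architecture as the paper: isoclinism, central-factor descent with nested anchors and exact-centralization products, Sylow decomposition, and $H=C_G(G')$. The genuine gap is in the core $p$-group step, where your strong-interaction deduction targets the wrong quantity. The product construction takes a clique $T$ of size $d+1$ at level $j$ whose elements lie in distinct cosets of $A_k$ (available when $t_{j,k}\ge d$). It pairs $T$ with a level-$k$ clique $D\subseteq A_k\cap\bigcap_{a\in T}C_P(a)$, which has size at least $\kappa_p(\rho_k-2(d+1)\ell)-c_p+1$, since centralizing each element of $T$ costs rank $2\ell$. Then $(d+1)|D|\le N$ bounds the \emph{rank of the receiving stage} against a single interaction: it forces $t_{j,k}=O(N/(\kappa_p\rho_k))$ whenever $\kappa_p\rho_k^2\gtrsim N\ell$. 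It says nothing about how many levels interact strongly, and such a count would not by itself control $\sum_k\rho_k$.

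The paper thresholds on rank instead. Stages with $\rho_k<R$ are paid for crudely, at total cost at most $LR$. Stages with $\rho_k\ge R$ enter the nested-anchor construction with loss $2\sum_{j<k}t_{j,k}+O(L^2\ell)=O(L^2N/R+L^2\ell)$; the $L^2\ell$ term comes from making each anchor centralize all earlier anchors. Every one of these terms needs the number of levels $L=\log_p|P'|$ to be polylogarithmic, and you never establish this in the $p$-group stage. The route is: Pyber's class-size bound $(2N+1)^2$ together with Neumann--Vaughan-Lee gives $\log_2|P'|=O((\log N)^2)$. In your third stage these attributions are swapped, and the step $[G:H]\le|\Aut(G')|\le|G'|^{\log_2|G'|}$ is missing. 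With $L=O((\log N)^2)$, $\ell=O(\log N)$ and $R\asymp\sqrt{N(L+\ell)}$, one gets the error $O(\sqrt N(\log N)^3)$.

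Three further points need repair. \textbf{First}, for $p=3$ the slack is not automatic. Anchoring hyperbolic planes gives clique credit only $\frac p2\rho$, because each anchored block loses its anchor vertex; the factor $\frac{2}{p+1}$ in your cost is not what anchoring delivers. For $p=3$ this yields coefficient $\frac{\log_2 3}{3}\approx 0.53>\frac12$, so the argument would fail whenever the only nonabelian Sylow subgroup is a $3$-group. You need credit $\kappa_3\rho-O(1)$ with $\kappa_3>\log_2 3$. The paper obtains $\kappa_3=2$ from an explicit $13$-point pairwise nonorthogonal set in $\F_3^6$, and an unspecified appeal to Chin-type constructions does not replace this.

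\textbf{Second}, $\omega$ is multiplicative, not additive, on direct products: Cartesian products of cliques give $\omega(\prod_p P_p)\ge\prod_p\omega(P_p)$. This is what actually justifies your $O(\log N)$ count of nonabelian Sylow subgroups. When there are at least two, it gives $\sum_i n_i\le\frac23\prod_i n_i$, hence $\log_2 a\le N/3+o(N)$, so only a single nonabelian Sylow subgroup matters for the sharp error.

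\textbf{Third}, abelian subgroups built from one $g$ and pieces of $C_H(g)$ cover only $gC_H(g)$, not $gH$. You need a dominating set $D\subseteq gH$ with $gH=\bigcup_{x\in D}xC_H(x)$. Put $Z_0=H\cap Z(G)$ and $M=[H:Z_0]\le[G:Z(G)]\le C^N$. Each $xC_H(x)$ is a union of at least $M/B$ cosets of $Z_0$, so a probabilistic argument gives $|D|\le B(1+\ln M)=N^{O(1)}$. Each $xC_H(x)$ is then covered by the $a(H)$ abelian subgroups $\langle x,A_i\cap C_H(x)\rangle$.
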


\section{Global compression estimates}

Before entering the $p$-group analysis, we record two coarse but uniform consequences of bounded clique number. They control conjugacy classes and the derived subgroup. These losses are negligible compared with the exponential scale of the main theorem.

Throughout this section, put $N=\omega(G)$.

\begin{lemma}[Polynomial BFC bound]\label{lem:bfc}
Every conjugacy class of a finite group $G$ has size at most
\[
B_N:=(2N+1)^2.
\]
\end{lemma}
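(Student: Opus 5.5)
The plan is to bound $|x^G|=[G:C_G(x)]$ for a fixed noncentral $x$ (central $x$ is trivial) in two rounds, each costing a factor of order $2N+1$. The main tool is B.~H. Neumann's coset-covering lemma in its finite counting form: if $G$ is a union of cosets $g_iH_i$, $1\le i\le m$, then $\sum_i 1/[G:H_i]\ge 1$. In particular some $H_i$ has index at most $m$.

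\textbf{First round: a centraliser of small index.} Put $D=C_G(x)$ and choose a maximal pairwise noncommuting subset $S\subseteq G\setminus D$ such that $S\cup\{x\}$ is still pairwise noncommuting; then $|S|\le N-1$. By maximality, every $g\in G\setminus D$ either lies in $S$ or commutes with some $s\in S$. Hence
\[
G=D\cup\bigcup_{s\in S}C_G(s)\cup S .
\]
This covers $G$ by at most $N$ subgroups together with at most $N-1$ singletons. Each singleton is a coset of the trivial subgroup, contributing $1/|G|$ to Neumann's sum. Counting therefore gives
\[
\frac1{[G:D]}+\sum_{s\in S}\frac1{[G:C_G(s)]}\ \ge\ 1-\frac{N-1}{|G|}.
\]
If $D$ itself has index at most $2N+1$ we are done. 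Otherwise some $C_G(s)$, with $s\in S$ noncommuting with $x$, has index at most about $2N+1$; the factor $2$ comes from absorbing the error term.

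\textbf{Second round: reduce to the small-index subgroup.} Let $H=C_G(s)$ be the subgroup found above, so $[G:H]\le 2N+1$. I will try to rerun the same argument inside $H$ for the element $xs$, or for suitable products $x h$ with $h\in H$. Such elements still fail to commute with $s$, so any pairwise noncommuting family among them built inside $H$ can be enlarged by one element. The aim is the bound $[H:C_H(x')]\le 2N+1$ for an appropriate element $x'$ whose centraliser is tied to $C_G(x)$. Multiplying by $[G:H]$ would then give $[G:C_G(x)]\le (2N+1)^2$.

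\textbf{Main obstacle.} The delicate point is exactly this transfer in the second round. Neumann's lemma only guarantees that \emph{some} member of the cover has small index, not that the prescribed centraliser $D$ does. So one must arrange the covers so that the only possible small-index members are forced to be centralisers of elements related to $x$. My first attempt is to take all covering sets inside the coset $xD$ structure, using elements of the form $xg$ with $g\in D$. Their centralisers all contain $C_D(g)\cap D$. Alternatively, I would use the conjugacy class $x^G$ itself: a maximal noncommuting subset $T\subseteq x^G$ has $|T|\le N$, and the centralisers $C_G(t)$ for $t\in T$ are all conjugates of $D$. Hence any small-index member of a cover built from them yields a small-index bound for $D$ itself, since conjugates have equal index. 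I expect the conjugacy-class version to be the one that closes the argument cleanly. If it does, the first round becomes a counting estimate over $x^G\cap C_G(t)$ for $t\in T$, which pins down $[G:D]$ directly.
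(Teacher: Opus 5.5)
Your proposal has a genuine gap at exactly the point you flag. Your first round is sound; the singletons are in fact superfluous, since $s\in C_G(s)$, so $G=C_G(x)\cup\bigcup_{s\in S}C_G(s)$ is already a cover by at most $N$ subgroups. But it only yields a dichotomy: either $[G:C_G(x)]$ is small, or some $s$ with $[x,s]\neq1$ has $[G:C_G(s)]<N$.

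Everything then rests on the second round, and that round is not yet an argument. With $H=C_G(s)$, you must bound $[H:H\cap C_G(x)]$, the size of the $H$-orbit of $x$ under conjugation, where $x\notin H$.
\begin{itemize}
\item Replacing $x$ by $xs$ does not help. For $h\in H$ one has $[xs,h]=[x,h]^s$, so $C_H(xs)=C_H(x)$ and the problem is literally unchanged.
\item Rerunning the first round inside $H$ just reproduces the dichotomy: either $[H:C_H(x)]$ is small, or there is $s'\in H\setminus C_G(x)$ with $[H:C_H(s')]<N$.
\item Nothing in your argument stops the bad alternative from recurring at every stage, each time costing a factor of about $N$.
\end{itemize}
So no bound follows at all, let alone $(2N+1)^2$.

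The conjugacy-class variant does not close the gap either. If $T\subseteq x^G$ is a maximal pairwise noncommuting subset, then $\bigcup_{t\in T}C_G(t)$ covers $x^G$, not $G$. Neumann's lemma therefore does not apply, and the fact that the $C_G(t)$ are conjugates of $C_G(x)$ buys nothing. What the counting actually gives is $|x^G|\le N\,|x^G\cap C_G(x)|$, i.e.\ at least a $1/N$ fraction of the conjugates of $x$ commute with $x$. That is not an upper bound on $|x^G|$. In any group of class two, $x^G\subseteq xZ(G)$ commutes with $x$ entirely; for example, in the Heisenberg group over $\F_q$ one has $|x^G|=q$, and your inequality holds vacuously whatever $|x^G|$ is.

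The missing ingredient is a mechanism that forces the small index onto $C_G(x)$ itself. One way is to cover $G$ by few cosets of subgroups contained in $C_G(x)$, so that Neumann's lemma bounds $[G:C_G(x)]$ directly. A useful fact in this direction: if $A$ is abelian and $a,b\in A$, then $xa$ and $xb$ commute iff $ab^{-1}\in C_G(x)$, so $[A:C_A(x)]\le N$. Keep your chosen elements $s,s',\dots$ in a central abelian subgroup $A$, and choose each new one outside $A\,C_H(x)$. Then each step at least doubles $[A:C_A(x)]$, so the recursion stops after at most $\log_2N$ rounds. However, this only gives $[G:C_G(x)]\le N^{O(\log N)}$, not $(2N+1)^2$.

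The paper does not prove this lemma; it imports it from Pyber's 1987 paper (Lemma 3.1). A self-contained proof needs a genuinely new idea at precisely the step you left open.
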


\begin{proof}
This is Pyber \cite[Lemma 3.1]{Pyber1987}, in the present notation.
\end{proof}

\begin{corollary}[Small derived subgroup]\label{cor:derived}
There is an absolute constant $C$ such that every finite group with $\omega(G)=N$ satisfies
\[
\log_2|G'|\le C\bigl(\log_2(N+2)\bigr)^2.
\]
\end{corollary}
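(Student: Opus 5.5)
The plan is to combine Lemma~\ref{lem:bfc} with the theorem of Neumann and Vaughan-Lee \cite{NeumannVaughanLee1977}. That theorem says that if every conjugacy class of a finite group $G$ has size at most $m$, then $|G'|\le m^{(c\log_2 m)+1}$ for an absolute constant $c$; equivalently $\log_2|G'|=O((\log_2 m)^2)$. Everything else is parameter bookkeeping. No further structure of $G$ is needed, since $G$ is a BFC group with explicit bound $m$.

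First, Lemma~\ref{lem:bfc} gives $m\le B_N=(2N+1)^2$. Then
\[
\log_2 m\le 2\log_2(2N+1)\le 2\log_2\bigl(2(N+2)\bigr)\le 4\log_2(N+2),
\]
where the last step uses $\log_2(N+2)\ge 1$ for $N\ge 0$. Next, taking the Neumann--Vaughan-Lee bound in the form $\log_2|G'|\le \log_2 m\,\bigl(c\log_2 m+1\bigr)$, we get
\[
\log_2|G'|\le 4\log_2(N+2)\bigl(4c\log_2(N+2)+1\bigr)\le (16c+4)\bigl(\log_2(N+2)\bigr)^2 .
\]
So one may take $C=16c+4$. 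Degenerate cases such as $G$ abelian or $N$ small are already covered, because $\log_2(N+2)\ge1$ and $|G'|\ge1$.

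The only real point of care is quoting the Neumann--Vaughan-Lee result in the exact form that has a $\log m$ exponent. Some versions are stated as $|G'|\le m^{\frac12(3+5\log_2 m)}$, which is of the same shape and gives the same conclusion with a different constant. If only Wiegold's weaker form $|G'|\le f(m)$ with $f$ non-quasipolynomial were available, the corollary would fail. So the proof should cite specifically the quasipolynomial form $\log|G'|=O((\log m)^2)$. I expect no other obstacle.
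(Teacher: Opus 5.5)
Your proposal is correct and is the same argument as the paper's. Like the paper, you feed Pyber's class-size bound $(2N+1)^2$ from Lemma~\ref{lem:bfc} into the Neumann--Vaughan-Lee estimate $|G'|\le r^{(3+5\log_2 r)/2}$ and take logarithms; your explicit bookkeeping ($\log_2 m\le 4\log_2(N+2)$, giving $C=16c+4$) just spells out the paper's one-line conclusion.
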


\begin{proof}
By Lemma~\ref{lem:bfc}, $G$ is an $r$-BFC group with $r\le(2N+1)^2$. Neumann and Vaughan-Lee \cite{NeumannVaughanLee1977} proved the general bound
\[
|G'|\le r^{(3+5\log_2 r)/2}.
\]
Taking binary logarithms gives $\log_2|G'|=O((\log(N+2))^2)$, as required.
\end{proof}

\section{Symplectic toolkit}

The $p$-group recursion will convert commutators into alternating forms over finite fields. This section isolates the linear-algebraic ingredients required later: efficient covers by isotropic subspaces and lower bounds for pairwise nonorthogonal sets. The constants are chosen to make the eventual covering cost and clique credit directly comparable.

Let $V$ be a vector space over $\F_p$ and let $\phi$ be alternating of rank $\rho=2m$.

\begin{lemma}[Spread cover]\label{lem:spread}
The space $V$ is the union of at most $p^m+1$ $\phi$-isotropic subspaces.
\end{lemma}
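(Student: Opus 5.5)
The plan is to reduce to the nondegenerate case and then invoke the classical symplectic spread. Let $R=\rad\phi=\{v\in V:\phi(v,w)=0\ \text{for all } w\in V\}$. The form $\phi$ descends to a nondegenerate alternating form $\bar\phi$ on $\bar V=V/R$, and $\dim \bar V=\rho=2m$. If $\bar U\le\bar V$ is $\bar\phi$-isotropic, then its full preimage $U\le V$ is $\phi$-isotropic. Indeed, $\phi(u,u')$ depends only on the classes of $u,u'$ modulo $R$. Moreover, preimages of a family of subspaces covering $\bar V$ cover $V$. It therefore suffices to cover a nondegenerate symplectic space $(\F_p^{2m},\bar\phi)$ by $p^m+1$ totally isotropic subspaces. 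The case $m=0$ is trivial, since then $V=R$ is itself isotropic and one subspace suffices.

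For the nondegenerate case I would use an explicit symplectic spread. Identify $\bar V$ with $\F_{q}\times\F_{q}$, where $q=p^m$ and $\F_q$ is viewed as an $m$-dimensional $\F_p$-space. Define
\[
\psi\bigl((x,y),(x',y')\bigr)=\Tr_{\F_q/\F_p}(xy'-x'y).
\]
This is alternating and $\F_p$-bilinear. It is nondegenerate because the trace form $(a,b)\mapsto\Tr(ab)$ is nondegenerate on $\F_q$. All nondegenerate alternating forms of a given dimension over $\F_p$ are isometric, so we may assume $\bar\phi=\psi$.

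Now take the $q+1$ subspaces
\[
L_\infty=\{(0,y)\},\qquad L_c=\{(x,cx):x\in\F_q\}\quad (c\in\F_q).
\]
These are the $\F_q$-lines through the origin. Each has $\F_p$-dimension $m$. Every nonzero vector lies on exactly one of them, so together they cover $\bar V$. Each is totally isotropic. On $L_c$ we have $xy'-x'y=x\,cx'-x'\,cx=0$, and on $L_\infty$ the expression vanishes identically. Pulling back to $V$ gives $p^m+1$ $\phi$-isotropic subspaces covering $V$.

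The only point requiring care is the isometry step, namely that $\bar\phi$ may be replaced by $\psi$. This is the standard classification of alternating forms, which gives a symplectic basis in each case. Alternatively, one can avoid it by transporting the spread: choose an $\F_p$-linear isometry $(\bar V,\bar\phi)\to(\F_q^2,\psi)$, which exists because both spaces admit symplectic bases of the same size. Everything else is routine.
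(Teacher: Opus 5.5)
Your proposal is correct and follows essentially the same route as the paper's proof. You reduce to the nondegenerate quotient $V/\rad\phi$, identify it isometrically with $\F_{p^m}^2$ carrying the form $\Tr(xy'-x'y)$, cover it by the $p^m+1$ lines $L_t$ and $L_\infty$, and pull these back to $V$. You also check nondegeneracy of the model form and justify the isometry step, both of which the paper leaves implicit.
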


\begin{proof}
If $m=0$, then $\phi=0$ and $V$ itself is isotropic, so the assertion is immediate. Assume henceforth that $m\ge1$. It is enough to treat the nondegenerate quotient $W=V/\rad\phi$, which has dimension $2m$. Put $E=\F_{p^m}$ and identify $W$ with $E^2$ as an $\F_p$-space. Up to isometry we may use
\[
\langle(x,y),(x',y')\rangle=\Tr_{E/\F_p}(xy'-x'y).
\]
For $t\in E$ let
\[
L_t=\{(x,tx):x\in E\},\qquad L_\infty=\{(0,y):y\in E\}.
\]
Each is $m$-dimensional and totally isotropic. Every nonzero vector $(x,y)$ lies in exactly one of them: in $L_\infty$ if $x=0$, and otherwise in $L_{y/x}$. Their inverse images in $V$ are $\phi$-isotropic and cover $V$.
\end{proof}

\begin{definition}\label{def:kappa}
Set
\[
\kappa_2=1,\qquad \kappa_3=2,\qquad \kappa_p=\frac p2\quad(p\ge5),
\]
and $c_2=0$, $c_3=2$, $c_p=0$ for $p\ge5$.
\end{definition}

\begin{lemma}[Orthogonal-anchor composition]\label{lem:anchor}
Let $(V_1,\phi_1)$ and $(V_2,\phi_2)$ be alternating spaces and let $C_i\subseteq V_i$ be pairwise nonorthogonal sets with distinguished vertices $u_i\in C_i$. In $V_1\perp V_2$, the set
\[
C=(C_1\setminus\{u_1\})\cup\{u_1+v:v\in C_2\}
\]
is pairwise nonorthogonal, has cardinality $|C|=|C_1|+|C_2|-1$, and may be given distinguished vertex $u_1+u_2$.
\end{lemma}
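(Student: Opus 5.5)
The plan is a direct verification using only bilinearity of the orthogonal-sum form $\phi=\phi_1\perp\phi_2$ on $V_1\perp V_2$:
\[
\phi\bigl((a_1,a_2),(b_1,b_2)\bigr)=\phi_1(a_1,b_1)+\phi_2(a_2,b_2).
\]
We regard $C_1\subseteq V_1$ and $C_2\subseteq V_2$ as subsets of $V_1\perp V_2$, so the elements of $C$ are $(x,0)$ with $x\in C_1\setminus\{u_1\}$ and $(u_1,v)$ with $v\in C_2$. Write $A=C_1\setminus\{u_1\}$ and $B=\{u_1+v:v\in C_2\}$. Here ``pairwise nonorthogonal'' means $\phi(c,c')\neq0$ for all distinct $c,c'$ in the set; the argument never uses any relation between an element and itself, which is automatic for an alternating form.

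There are three cases for a pair of distinct elements of $C$.

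\emph{Both in $A$.} Then $\phi((x,0),(x',0))=\phi_1(x,x')\ne0$ by hypothesis on $C_1$.

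\emph{One in $A$, one in $B$.} Then
\[
\phi\bigl((x,0),(u_1,v)\bigr)=\phi_1(x,u_1)+\phi_2(0,v)=\phi_1(x,u_1)\neq0,
\]
since $x\ne u_1$ are distinct elements of $C_1$.

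\emph{Both in $B$.} Then
\[
\phi\bigl((u_1,v),(u_1,v')\bigr)=\phi_1(u_1,u_1)+\phi_2(v,v')=\phi_2(v,v'),
\]
because $\phi_1$ is alternating. This is nonzero for distinct $v,v'\in C_2$.

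For the cardinality, first note that the map $v\mapsto u_1+v$ is injective, so $|B|=|C_2|$. Next, $A$ and $B$ are disjoint: elements of $A$ have $V_2$-component $0$, whereas an element of $B$ with $V_2$-component $0$ would be $(u_1,0)$, and $u_1\notin A$. Note that $0\in C_2$ is possible, since an isolated vertex would still be ``pairwise nonorthogonal'' only if $|C_2|=1$, and the argument covers that case too. Hence
\[
|C|=|A|+|B|=(|C_1|-1)+|C_2|=|C_1|+|C_2|-1.
\]
Finally, $u_1+u_2\in B\subseteq C$, so it is a legitimate distinguished vertex.

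No step is a real obstacle. The only points needing care are the disjointness and injectivity behind the exact count, and the use of $\phi_1(u_1,u_1)=0$, which is precisely where the alternating hypothesis enters.
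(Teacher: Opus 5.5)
Your proof is correct and follows essentially the same route as the paper: the same three-case check using bilinearity of $\phi_1\perp\phi_2$ and $\phi_1(u_1,u_1)=0$. Your disjointness argument, that the only possible common point is $(u_1,0)$ and this point was removed from the first block, is more precise than the paper's remark that the two parts lie in disjoint affine subspaces, since $V_1$ and $u_1+V_2$ in fact meet at $u_1$.
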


\begin{proof}
Two vertices $u_1+v,u_1+w$ in the second block have pairing $\phi_2(v,w)\ne0$. If $x\in C_1\setminus\{u_1\}$, then $(\phi_1\perp\phi_2)(x,u_1+v)=\phi_1(x,u_1)\ne0$. The first block is a clique by assumption, and the two displayed parts lie in disjoint affine subspaces.
\end{proof}

\begin{lemma}[Scalar clique credit]\label{lem:scalar-credit}
For every alternating form of rank $\rho$ over $\F_p$, there is a pairwise nonorthogonal set $C$ satisfying
\[
|C|-1\ge \kappa_p\rho-c_p.
\]
\end{lemma}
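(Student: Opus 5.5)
The plan is to reduce to a nondegenerate space, handle a single hyperbolic plane by hand, and glue planes together with Lemma~\ref{lem:anchor}. The case $p=3$ needs a better gluing step, and that is where I expect the difficulty.

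\textbf{Reduction and base case.} If $\rho=0$ the bound reads $|C|-1\ge -c_p$, which $C=\{0\}$ satisfies. (In the later application one may prefer $C=\emptyset$ when $\rho=0$; that is a convention to fix there.) Assume $\rho=2m\ge2$. Pairwise nonorthogonality only depends on images in $W=V/\rad\phi$, and any lift of a nonorthogonal set in $W$ is nonorthogonal in $V$. So it suffices to work in the nondegenerate space $W$. Write $W$ as an orthogonal sum $H_1\perp\cdots\perp H_m$ of hyperbolic planes. In one plane $H$ over $\F_p$, two vectors are orthogonal if and only if they are proportional. Hence choosing one nonzero representative of each of the $p+1$ lines gives a pairwise nonorthogonal set $C_H$ with $|C_H|-1=p$.

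\textbf{The cases $p=2$ and $p\ge5$.} Fix any distinguished vertex in each $C_{H_i}$. Applying Lemma~\ref{lem:anchor} iteratively to $C_{H_1},\dots,C_{H_m}$ produces a pairwise nonorthogonal set $C\subseteq W$. Since $|C|-1$ is additive under the anchor composition, $|C|-1=mp$.
\begin{itemize}
\item For $p=2$ this gives $2m=\kappa_2\rho$.
\item For $p\ge5$ it gives $pm=\tfrac p2\rho=\kappa_p\rho$.
\end{itemize}
Both cases therefore hold with $c_p=0$.

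\textbf{The case $p=3$ (main obstacle).} Plain anchoring gives only $|C|-1=3m$, whereas we need $4m-2$. So each new plane must contribute $4$ rather than $3$, with a loss of $2$ allowed once.

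I would try a \emph{two-anchor} composition. Suppose $u_1,u_2\in C_1$ with $\phi_1(u_1,u_2)=\lambda\ne0$. Take
\[
(C_1\setminus\{u_1,u_2\})\cup\{u_1+v:v\in A\}\cup\{u_2+w:w\in B\},
\]
where $A,B\subseteq H$ are each pairwise nonorthogonal. Mixed pairs have pairing $\lambda+\phi(v,w)$, so the only condition is $\phi(v,w)\neq-\lambda$ for all $v\in A$, $w\in B$. The gain from the new plane is $|A|+|B|-2$. The task is then a finite search in $\F_3^2$ (up to sign choices of representatives) for $|A|+|B|=6$. A first attempt with $A$ a full set of four line representatives only admits $|B|=1$, which gives gain $3$.

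If no such pair exists, my fallback is to search directly in the $4$-dimensional space $H\perp H'$ over $\F_3$ for a block with many nonorthogonal vectors that also carries two anchors. The target is a set of size $9$ with an anchor structure allowing gain $8$ per pair of planes. A set of size $7$ already meets the bound $|C|-1\ge 4\cdot 2-2$ at $m=2$; the extra size is what is needed to keep gaining $8$ per pair of planes. I would then iterate this block. This explicit small-dimensional construction for $p=3$ is the step I expect to be hardest. The constant $c_3=2$ is presumably there to absorb the loss from the first block or from an odd number of planes.
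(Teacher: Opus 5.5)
You handle $\rho=0$, $p=2$ and $p\ge5$ correctly, and exactly as the paper does: one vector per line of a hyperbolic plane, glued by Lemma~\ref{lem:anchor}. The gap is $p=3$, which is the only case needing a new idea. You give no construction there, and both searches you propose provably fail.

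\textbf{The single-plane two-anchor search.} Normalize $\lambda=1$ (replace $B$ by $-B$ if necessary). When $|B|\ge2$, let two elements of $B$ be a hyperbolic pair $e,f$ with $\phi(e,f)=1$. The only nonzero $v$ with $\phi(v,e),\phi(v,f)\in\{0,1\}$ are $e$, $-f$ and $e-f$, so the line $\langle e+f\rangle$ is missed. Each candidate $\pm(e+f),\pm(e-f)$ for a further element $w\in B$ has $\phi(v,w)=2$ for at least one $v\in\{e,-f,e-f\}$. Two further elements leave at most one survivor. Hence $|B|=2,3,4$ force $|A|\le3,2,1$ respectively. So $|A|+|B|\le5$, and the gain is at most $3$, no better than plain anchoring.

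\textbf{The fallback target.} A $9$-point set, or even an $8$-point set, in $\F_3^4$ does not exist. Let $C\subseteq\F_3^4$ be pairwise nonorthogonal. Scale two of its points to a hyperbolic pair $e,f$. Scale every other point to the form $e+bf+w$ with $b\in\{1,2\}$ and $w\in\langle e,f\rangle^\perp$. Splitting the points by $b$ gives exactly your two-anchor data with $\lambda=1$ in the plane $\langle e,f\rangle^\perp$. Therefore $|C|\le2+5=7$. So neither a plane (with one or two anchors) nor a four-dimensional block glued at one anchor gives credit $4$ per plane. A $7$-point block reproduces only $3$ per plane.

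\textbf{The missing ingredient} is a genuinely six-dimensional block. The paper lists $13$ explicit vectors in $\F_3^6$ and checks, via their Gram matrix, that all $78$ pairings are nonzero. This gives credit $12$ over three planes. Gluing $q=\lfloor m/3\rfloor$ such blocks and at most two leftover planes with the ordinary Lemma~\ref{lem:anchor} gives
\[
12q+3(m-3q)=3m+3q\ge4m-2,
\]
which is exactly where $c_3=2$ comes from.

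Your two-anchor composition is itself valid, and it could be iterated with a four-dimensional block. It would need data with $|A|+|B|\ge10$ there, however. By the normalization above, such data is equivalent to a $12$-point pairwise nonorthogonal set in $\F_3^6$. (Normalizing the paper's $13$-point set at two of its points even gives $|A|+|B|=11$.) So an explicit six-dimensional construction, by hand or by computer search, cannot be avoided, and your proposal stops exactly before supplying it.
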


\begin{proof}
If $\rho=0$, take $C$ to be any singleton. Assume henceforth that $\rho>0$. For $p=2$, a nondegenerate rank-$\rho=2m$ symplectic space has a clique of size $2m+1=\rho+1$: start from $\{e,f,e+f\}$ in one hyperbolic plane and use Lemma~\ref{lem:anchor} inductively.

For odd $p$, a hyperbolic plane contains
\[
\{e+\lambda f:\lambda\in\F_p\}\cup\{f\},
\]
a clique of size $p+1$. Repeated application of Lemma~\ref{lem:anchor} gives a clique of size $pm+1=(p/2)\rho+1$, proving the claim for $p\ge5$.

For $p=3$, consider the following $13$ projective points in $\F_3^6$:
\begin{align*}
&(1,2,0,2,1,1),(0,1,1,0,0,2),(0,1,0,1,0,0),(0,1,1,2,2,1),\\
&(1,2,1,0,2,0),(1,1,0,1,2,2),(1,0,0,1,2,0),(1,0,1,1,2,1),\\
&(1,1,1,0,0,1),(1,2,0,0,2,1),(1,0,0,1,1,1),(1,0,1,1,1,1),\\
&(1,2,0,1,1,0).
\end{align*}
They are pairwise nonorthogonal for
\[
x_1y_2-x_2y_1+x_3y_4-x_4y_3+x_5y_6-x_6y_5.
\]
Indeed, the entries strictly above the diagonal of the corresponding $13\times13$ symplectic Gram matrix, read row by row, are
\begin{align*}
&1,1,1,2,2,2,1,1,2,1,2,2;\\
&1,1,1,2,2,2,2,1,1,1,1;\\
&2,1,2,2,1,1,2,2,1,2;\\
&1,2,1,1,2,2,1,2,2;\\
&1,2,1,1,2,1,1,1;\\
&1,2,1,2,2,1,2;\\
&1,2,1,2,1,2;\\
&2,2,2,1,2;\\
&2,2,2,1;\\
&2,2,2;\\
&2,1;\\
&2,
\end{align*}
so none of the $78$ pairings vanishes. Decompose a rank-$2m$ space into $q=\lfloor m/3\rfloor$ rank-six blocks and at most two hyperbolic planes. Applying Lemma~\ref{lem:anchor} block by block gives clique credit
\[
12q+3(m-3q)\ge4m-2=2\rho-2.
\]
\end{proof}

\begin{remark}\label{rem:alpha}
For every prime $p$,
\[
\alpha_p:=\frac{\log_2 p}{2\kappa_p}\le\frac12,
\]
with equality only at $p=2$. Explicitly,
\[
\alpha_3=\frac{\log_2 3}{4}<0.397,\qquad \alpha_p=\frac{\log_2 p}{p}<\frac12\quad(p\ge5).
\]
\end{remark}

\subsection{The extraspecial lower bound}

Extraspecial groups are classical extremal examples in the study of pairwise noncommuting sets; see, for example, \cite{Chin2005} and the discussion around the lower bound attributed to Isaacs in \cite{Erdos1997,Pyber1987}. We include the short covering argument because the precise value of both parameters is what fixes the sharp exponential constant.

\begin{lemma}[Extraspecial lower bound]\label{lem:extraspecial}
For every $m\ge1$ there is an extraspecial $2$-group $E_m$ such that
\[
\omega(E_m)=2m+1,\qquad a(E_m)=2^m+1.
\]
Consequently,
\[
\log_2 h(n)\ge\frac n2-O(1).
\]
\end{lemma}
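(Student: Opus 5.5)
We take $E_m$ to be any extraspecial $2$-group of order $2^{1+2m}$, for instance the central product of $m$ copies of $D_8$. Then $Z(E_m)=E_m'=\langle z\rangle$ has order $2$, and $V=E_m/Z(E_m)\cong\F_2^{2m}$. The commutator induces a nondegenerate alternating form $\phi$ on $V$, defined by $[x,y]=z^{\phi(\bar x,\bar y)}$. Two elements commute exactly when their images are $\phi$-orthogonal. By the proof of Lemma~\ref{lem:finite-reduction}, $\omega(E_m)$ is therefore the clique number of the graph $\Delta$ on $V$ joining nonorthogonal vectors, and $a(E_m)=\chi(\Delta)$. Note that $0$ is an isolated vertex. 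So the whole problem becomes a question about the symplectic space $(V,\phi)$.

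For the clique number, Lemma~\ref{lem:scalar-credit} with $p=2$ gives a pairwise nonorthogonal set of size $2m+1$, so $\omega(E_m)\ge 2m+1$. For the matching upper bound, let $v_1,\dots,v_k$ be pairwise nonorthogonal, so $\phi(v_i,v_j)=1$ for $i\ne j$, and suppose $k\ge 2m+2$. We derive a contradiction. If $k$ is even, the Gram matrix $J-I$ over $\F_2$ is invertible, because $(J-I)^2=(k-2)J+I=I$. Hence the $v_i$ would be linearly independent, forcing $k\le 2m$. If $k$ is odd and $k\ge 2m+3$, we drop one vector and reduce to the even case. The only remaining case is $k=2m+2$, which is even and hence already excluded. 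Thus $\omega(E_m)=2m+1$.

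For the covering number, Lemma~\ref{lem:spread} covers $V$ by $2^m+1$ isotropic subspaces. Their preimages in $E_m$ are abelian subgroups, so $a(E_m)\le 2^m+1$. This lower bound on $a(E_m)$ is the step I expect to be the main obstacle. Suppose $E_m$ is covered by abelian subgroups $A_1,\dots,A_k$. As in Lemma~\ref{lem:finite-reduction}, we may assume each $A_i\supseteq Z(E_m)$. Then the images $\bar A_i$ are totally isotropic subspaces, so $|\bar A_i|\le 2^m$. Each contains $0$, so it covers at most $2^m-1$ nonzero vectors. Covering all $2^{2m}-1$ nonzero vectors therefore requires
\[
k\ge\frac{2^{2m}-1}{2^m-1}=2^m+1.
\]
Hence $a(E_m)=2^m+1$.

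Finally, given $n\ge3$, choose $m=\lfloor (n-1)/2\rfloor$, so that $2m+1\le n$. Then
\[
h(n)\ge a(E_m)>2^m\ge 2^{(n-2)/2},
\]
which gives $\log_2 h(n)\ge n/2-O(1)$. For small $n$ the bound is trivial, since $h(n)\ge1$.
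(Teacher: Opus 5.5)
Your proof is correct and follows the paper's argument essentially step for step. You identify $E_m/Z(E_m)$ with a $2m$-dimensional symplectic space over $\F_2$, bound cliques through the Gram matrix $J-I$ (you show it is invertible for even size via $(J-I)^2=I$ and drop a vector in the odd case, where the paper quotes its rank $t$ or $t-1$ directly), and obtain $a(E_m)=2^m+1$ from a spread together with the count $(2^{2m}-1)/(2^m-1)$ of nonzero vectors per maximal isotropic subspace.
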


\begin{proof}
Let $E_m$ be an extraspecial $2$-group of order $2^{1+2m}$. Then $V=E_m/Z(E_m)$ is a $2m$-dimensional symplectic space over $\F_2$, with
\[
\langle xZ(E_m),yZ(E_m)\rangle=[x,y]\in Z(E_m)\cong\F_2.
\]
Thus pairwise noncommuting subsets of $E_m$ correspond to pairwise nonorthogonal subsets of $V$.

Let $C=\{v_1,\ldots,v_t\}$ be such a subset. Its Gram matrix is $J_t+I_t$ over $\F_2$, whose rank is $t$ when $t$ is even and $t-1$ when $t$ is odd. Since this Gram matrix factors through the ambient symplectic form, its rank is at most $2m$, so $t\le2m+1$. Equality is attained by repeated use of Lemma~\ref{lem:anchor}. Hence $\omega(E_m)=2m+1$.

A symplectic spread of $V$ has $2^m+1$ Lagrangian members and gives an abelian cover of $E_m$ of that size. Conversely, in any abelian cover we may replace each abelian subgroup $A$ by $AZ(E_m)$; this remains abelian and its image $AZ(E_m)/Z(E_m)$ is an isotropic subspace of $V$. Every such isotropic subspace contains at most $2^m-1$ nonzero vectors, so a cover of the $2^{2m}-1$ nonzero vectors requires at least
\[
\frac{2^{2m}-1}{2^m-1}=2^m+1
\]
members. Thus $a(E_m)=2^m+1$. For arbitrary $n$, take $m=\lfloor(n-1)/2\rfloor$; then $\omega(E_m)\le n$ and
\[
\log_2(2^m+1)=\frac n2-O(1).
\]
\end{proof}

\section{Central-factor descent in a finite \texorpdfstring{$p$}{p}-group}

This is the structural core of the proof. We descend through a central series of the derived subgroup and, at each level, replace the surviving commutator by a scalar alternating form. The resulting cover tree is efficient only if the interaction between different levels is controlled; the remainder of the section develops the two mechanisms that do so.

Let $P$ be a finite $p$-group and put $n=\omega(P)$.

\begin{lemma}[Central-factor series]\label{lem:central-series}
There is a normal series
\begin{equation}\label{eq:central-series}
1=K_0<K_1<\cdots<K_L=P'
\end{equation}
such that
\[
|K_i:K_{i-1}|=p,\qquad K_i/K_{i-1}\le Z(P/K_{i-1})
\]
for every $i$.
\end{lemma}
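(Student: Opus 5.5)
We build the series from the bottom up by induction, choosing at each stage a central subgroup of order $p$ inside what remains of $P'$. Two standard facts about finite $p$-groups drive the construction. First, every nontrivial normal subgroup $N$ of a finite $p$-group $Q$ meets $Z(Q)$ nontrivially; this follows from the class equation applied to the conjugation action of $Q$ on $N$. Second, a nontrivial abelian $p$-group contains an element of order $p$, and hence a subgroup of order $p$.

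We take $K_0=1$. Suppose that for some $i\ge1$ we have constructed normal subgroups $1=K_0<\cdots<K_{i-1}$ of $P$, each contained in $P'$, with the stated properties. If $K_{i-1}=P'$, we stop. Otherwise $\bar P=P/K_{i-1}$ is a finite $p$-group, and $\bar N=P'/K_{i-1}$ is a nontrivial normal subgroup of $\bar P$. By the first fact, $\bar N\cap Z(\bar P)\neq1$. This intersection is an abelian $p$-group, so by the second fact it contains a subgroup $\bar K$ of order $p$.

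Let $K_i$ be the full preimage of $\bar K$ in $P$. Then $K_{i-1}<K_i\le P'$ and $|K_i:K_{i-1}|=p$. We also have $K_i/K_{i-1}=\bar K\le Z(P/K_{i-1})$, which is exactly the centrality condition required. It remains to check that $K_i$ is normal in $P$. Every subgroup of $Z(\bar P)$ is normal in $\bar P$, so $\bar K\trianglelefteq\bar P$, and the correspondence theorem then gives $K_i\trianglelefteq P$.

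The orders $|K_i|$ strictly increase and are bounded by $|P'|$, so the process terminates, necessarily with $K_L=P'$. Moreover $L=\log_p|P'|$. If $P$ is abelian, then $P'=1$ and the series is empty with $L=0$, so the statement holds trivially.

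No step here is a real obstacle. The only point needing care is that each $K_i$ must be normal in all of $P$, not merely in $K_{i+1}$. This is exactly why we choose each new factor inside the centre of the current quotient $P/K_{i-1}$, rather than working with a composition series of $P'$ alone.
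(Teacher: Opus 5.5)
Your proposal is correct and matches the paper's proof: at each step $P'/K_{i-1}$ is a nontrivial normal subgroup of the $p$-group $P/K_{i-1}$, so it meets $Z(P/K_{i-1})$ nontrivially, and the inverse image of an order-$p$ subgroup of that intersection is $K_i$. Your explicit check that $K_i\trianglelefteq P$, via centrality of $\bar K$ and the correspondence theorem, supplies a detail the paper leaves implicit.
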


\begin{proof}
If $K_i<P'$, then $P'/K_i$ is a nontrivial normal subgroup of the finite $p$-group $P/K_i$, so it meets $Z(P/K_i)$ nontrivially. That intersection contains a subgroup of order $p$; its inverse image defines $K_{i+1}$. Iteration terminates at $P'$.
\end{proof}

\subsection{The recursive cover}

Fix a branch of the recursion and set $A_0=P$.

\begin{lemma}[Central-factor form and descent]\label{lem:descent}
Suppose $0\le j<L$ and $A_j\le P$ satisfies $[A_j,A_j]\le K_{L-j}$. Define
\[
R_j=\{x\in A_j:[x,A_j]\le K_{L-j-1}\}.
\]
Then $V_j=A_j/R_j$ is an elementary abelian $p$-group, and commutation modulo $K_{L-j-1}$ induces a nondegenerate alternating form
\[
\phi_j:V_j\times V_j\longrightarrow K_{L-j}/K_{L-j-1}\cong\F_p.
\]
Writing $\rho_j=\rk\phi_j=\dim_{\F_p}V_j$, the group $A_j$ is covered by at most $p^{\rho_j/2}+1$ subgroups $A_{j+1}$ satisfying $[A_{j+1},A_{j+1}]\le K_{L-j-1}$.
\end{lemma}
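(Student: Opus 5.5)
Set $K=K_{L-j}$ and $K^-=K_{L-j-1}$. Since $K/K^-$ is central in $P/K^-$ and $[A_j,A_j]\le K$, the image of $A_j$ in $P/K^-$ has derived subgroup inside a central subgroup of order $p$. So the commutator map $\bar A_j\times\bar A_j\to K/K^-\cong\F_p$, where $\bar A_j=A_jK^-/K^-$, behaves like the commutator map of a class-two group. Concretely, $[xy,z]=[x,z]^y[y,z]$ and $[x,z]^y\equiv[x,z]$ modulo $K^-$, because $[x,z]\in K$ and $K/K^-$ is central. Hence
\[
\phi_j(xR_j,yR_j)=[x,y]K^-
\]
is biadditive in each variable on $A_j$, with values in $\F_p$. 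It is also alternating, since $[x,x]=1$ and $[y,x]=[x,y]^{-1}$.

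The first step is to show that $R_j$ is a normal subgroup of $A_j$. It is exactly the left kernel of this biadditive map, $\{x:[x,y]\in K^-\ \forall y\in A_j\}$, and biadditivity makes it a subgroup. It contains $A_j'$ and $A_j\cap K^-$, so it is normal in $A_j$. Next, $A_j/R_j$ is abelian because $A_j'\le R_j$. It has exponent $p$ because $[x^p,y]\equiv[x,y]^p\equiv1$, using additivity and the fact that $K/K^-$ has order $p$. Therefore $V_j$ is elementary abelian, and $\phi_j$ descends to a well-defined alternating form on $V_j$, again by biadditivity. Nondegeneracy is automatic, since $R_j$ was defined as the full radical. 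This gives $\rho_j=\dim V_j$.

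For the cover, apply Lemma~\ref{lem:spread} to $(V_j,\phi_j)$. This gives at most $p^{\rho_j/2}+1$ totally isotropic subspaces $W\le V_j$ whose union is $V_j$. Let $A_{j+1}$ be the inverse image of $W$ in $A_j$. These subgroups cover $A_j$. For $x,y\in A_{j+1}$, isotropy gives $[x,y]\in K^-$, so every generator of $[A_{j+1},A_{j+1}]$ lies in $K^-$. Since $K^-$ is a normal subgroup of $P$, it follows that $[A_{j+1},A_{j+1}]\le K_{L-j-1}$.

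The only delicate point is the well-definedness and biadditivity modulo $K^-$. This is where we need $K^-\trianglelefteq P$ and the centrality of $K/K^-$ in $P/K^-$ (Lemma~\ref{lem:central-series}), not just in $A_j$. Once that is in place, the rest is bookkeeping. The identification $K/K^-\cong\F_p$ requires choosing a generator, but isotropy does not depend on that choice.
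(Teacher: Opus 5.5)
Your proof is correct and follows essentially the same route as the paper: you use the class-two commutator identities modulo $K_{L-j-1}$ to obtain a well-defined biadditive alternating form whose radical is exactly $R_j$, and you use $[x^p,y]\equiv[x,y]^p\equiv1$ to get exponent $p$. You then pull back the isotropic subspaces given by Lemma~\ref{lem:spread}. The differences are cosmetic. The paper obtains normality of $R_j$ by identifying $R_j/(A_j\cap K_{L-j-1})$ with the centre of the class-two quotient, whereas you obtain it from $A_j'\le R_j$. You also spell out the inverse-image step, namely that $[A_{j+1},A_{j+1}]\le K_{L-j-1}$, which the paper leaves implicit.
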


\begin{proof}
In the quotient $\overline A_j=A_j/(A_j\cap K_{L-j-1})$, the derived subgroup is contained in the central subgroup $K_{L-j}/K_{L-j-1}$, so $\overline A_j$ has nilpotency class at most two. Moreover $R_j/(A_j\cap K_{L-j-1})=Z(\overline A_j)$, hence $R_j\trianglelefteq A_j$. Because $[A_j,A_j]\le K_{L-j}$ and $K_{L-j}/K_{L-j-1}$ is central, every commutator lies in $R_j$, so $A_j/R_j$ is abelian.

With the convention $[x,y]=x^{-1}y^{-1}xy$, class-two commutator calculus gives
\[
[x^p,y]\equiv[x,y]^p\equiv1\pmod{K_{L-j-1}},
\]
so $x^p\in R_j$. Thus $V_j$ is an $\F_p$-vector space. Define
\[
\phi_j(xR_j,yR_j)=[x,y]K_{L-j-1}.
\]
The class-two identities show that this is well defined, bilinear and alternating; its radical is zero by the definition of $R_j$. The covering statement follows from Lemma~\ref{lem:spread}.
\end{proof}

Repeating Lemma~\ref{lem:descent} produces a finite rooted cover tree. At depth $L$, every branch subgroup has trivial derived subgroup and is therefore abelian.

\begin{lemma}[Branch cover bound]\label{lem:branch}
The recursive cover satisfies
\[
\log_2 a(P)\le \max_B\left(\frac{\log_2 p}{2}\sum_{j=0}^{L-1}\rho_j(B)+L\right),
\]
where the maximum is over root-to-leaf branches $B$ and $\rho_j(B)$ denotes the rank encountered at stage $j$ on that branch.
\end{lemma}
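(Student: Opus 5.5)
The plan is to count the leaves of the cover tree built by iterating Lemma~\ref{lem:descent}, and then bound the logarithm of that count branch by branch. The leaves are abelian subgroups whose union is $P$, because each stage covers its node by its children. Hence $a(P)$ is at most the number of leaves. Write $\ell(A)$ for the number of leaves below a node $A$ at depth $j$. Then
\[
\ell(A)\le \bigl(p^{\rho_j(A)/2}+1\bigr)\max_{A'}\ell(A'),
\]
where $A'$ runs over the children of $A$.

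Next, unroll this recursion along a greedy path. Starting from the root, repeatedly choose a child that maximises the leaf count. This produces one root-to-leaf branch $B^\ast$ satisfying
\[
a(P)\le \prod_{j=0}^{L-1}\bigl(p^{\rho_j(B^\ast)/2}+1\bigr).
\]
For the base case, a node at depth $L$ has $[A_L,A_L]\le K_0=1$, so it is abelian and contributes one leaf. If some node at an earlier depth is already abelian, then $\rho_j=0$ there. In that case it has a single child, namely itself (the bound $p^0+1$ is just an overcount), so every branch has length exactly $L$.

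The elementary inequality $p^{x}+1\le 2p^{x}$ for $x\ge0$ then gives
\[
\log_2\bigl(p^{\rho_j/2}+1\bigr)\le \frac{\log_2 p}{2}\rho_j+1.
\]
Summing over $j=0,\dots,L-1$ yields $\frac{\log_2 p}{2}\sum_j\rho_j(B^\ast)+L$. This is at most the maximum over all branches, which proves the statement.

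There is no real obstacle here. The only care needed is bookkeeping. One point is to check that the hypothesis $[A_{j+1},A_{j+1}]\le K_{L-j-1}$ of Lemma~\ref{lem:descent} is inherited by each child, which is exactly how the lemma's conclusion is phrased. The other is to note that the ranks $\rho_j(B)$ depend on the branch, which is why the bound takes a maximum over branches rather than using a single global sum.
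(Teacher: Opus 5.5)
Your proposal is correct and matches the paper's proof. Both bound the number of leaves of the cover tree by the largest product of the branching factors $p^{\rho_j/2}+1$ along a root-to-leaf branch; your greedy maximal-child path is just an explicit version of the paper's ``induction from the leaves.'' Both then take logarithms using $\log_2(p^{\rho_j/2}+1)\le\frac{\rho_j}{2}\log_2 p+1$.
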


\begin{proof}
At each node the number of children is at most $p^{\rho_j/2}+1$, and
\[
\log_2(p^{\rho_j/2}+1)\le\frac{\rho_j}{2}\log_2p+1.
\]
Induction from the leaves bounds the total number of leaves by the largest product of the branching bounds along a root-to-leaf branch. Taking logarithms gives the claim.
\end{proof}

\subsection{Interaction ranks}

For $j<k$, the image of $A_k$ in $V_j$ is $\phi_j$-isotropic. Define
\[
\psi_{j,k}:A_k\longrightarrow V_j^*,\qquad y\longmapsto\bigl(xR_j\longmapsto\phi_j(xR_j,yR_j)\bigr),
\]
Since commutators are central modulo $K_{L-j-1}$, the class-two commutator identities show that $\psi_{j,k}$ is a group homomorphism from $A_k$ to the additive group $V_j^*$. Put $t_{j,k}=\rk\psi_{j,k}$. Thus $\ker\psi_{j,k}$ is a subgroup of $A_k$ and, by the first isomorphism theorem,
\[
[A_k:\ker\psi_{j,k}]=|\operatorname{im}\psi_{j,k}|=p^{t_{j,k}}.
\]
Since $\phi_j$ is nondegenerate, $t_{j,k}$ is the dimension of the image of $A_k$ in $V_j$.

\begin{lemma}[Transversal interaction clique]\label{lem:transversal}
For every $j<k$ and every $1\le d\le t_{j,k}$ there is a clique
\[
T=\{a_0,\ldots,a_d\}\subseteq A_j
\]
such that the $a_i$ lie in pairwise distinct left cosets of $A_k$, and for every $r\ne s$ the commutator $[a_r,a_s]$ has nonzero image in $K_{L-j}/K_{L-j-1}$.
\end{lemma}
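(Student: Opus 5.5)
We want $d+1$ elements of $A_j$ that pairwise have nonzero $\phi_j$-pairing and lie in distinct left cosets of $A_k$. The plan is to work inside $V_j$, where the image $W$ of $A_k$ is a $\phi_j$-isotropic subspace of dimension $t_{j,k}$. Nondegeneracy of $\phi_j$ gives a dual partner for $W$. Concretely, we pick $w_1,\dots,w_d\in W$ linearly independent, with $d\le t_{j,k}$. Since $\phi_j$ is nondegenerate, the map $V_j\to W^*$ is surjective, so we can choose $u_1,\dots,u_d\in V_j$ with $\phi_j(u_r,w_s)=\delta_{rs}$.

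Next we build the clique as a star-shaped configuration in the isotropic/dual pair. We try
\[
\bar a_0=\sum_{s=1}^d w_s+\text{(adjustment)},\qquad \bar a_r=u_r+\lambda_r w_r\quad(1\le r\le d).
\]
The pairing $\phi_j(\bar a_r,\bar a_s)$ for $r\neq s$ equals $\phi_j(u_r,u_s)+\lambda_s\delta_{rs}-\lambda_r\delta_{sr}=\phi_j(u_r,u_s)$, which may vanish. So a cleaner choice is to first make the $u_r$ pairwise nonorthogonal. We replace $u_r$ by $u_r+\sum_{s<r}c_{rs}w_s$ and then $\phi_j(u_r,u_s)$ changes by $c_{rs}\cdot(\pm1)$; these terms do not interact because $W$ is isotropic. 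We therefore adjust $u_r$ inductively in $r$: for each $s<r$, use the coefficient $c_{rs}$ to force $\phi_j(u_r,u_s)\ne0$. Adding multiples of $w_s$ with $s<r$ does not disturb $\phi_j(u_r,w_{s'})=\delta_{rs'}$, since $W$ is isotropic. For $a_0$ we take $\bar a_0=\sum_s w_s$, which gives $\phi_j(\bar a_0,u_r)=-1\neq0$ and leaves the $u_r$ as $\bar a_1,\dots,\bar a_d$. This yields a set of $d+1$ pairwise nonorthogonal vectors.

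We then lift to $A_j$ and check the coset condition. We choose preimages $a_r\in A_j$ of the vectors, taking $a_0\in A_k$ itself, which is possible since $w_s$ lies in the image of $A_k$. By definition of $\phi_j$, nonzero pairing means $[a_r,a_s]\notin K_{L-j-1}$, i.e.\ nonzero image in $K_{L-j}/K_{L-j-1}$; in particular the $a_r$ pairwise do not commute. For distinct left cosets of $A_k$, suppose $a_r^{-1}a_s\in A_k$. Then $\bar a_s-\bar a_r\in W$, so $\phi_j(\bar a_s-\bar a_r,w)=0$ for all $w\in W$. But $\phi_j(\bar a_r,w_r)=1$ and $\phi_j(\bar a_s,w_r)=0$ for $r\ne s$ with $r,s\ge1$, a contradiction. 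Against $a_0$, the class of $a_0$ pairs trivially with $W$, while $\bar a_r$ pairs with $w_r$ to $1$, again a contradiction.

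The main obstacle is the inductive adjustment making the $u_r$ pairwise nonorthogonal while keeping the dual conditions. This works because $W$ is isotropic and each $w_s$ hits exactly one $u_s$. One must check that over $\F_2$ a suitable $c_{rs}\in\F_p$ always exists. It does: the pairing is affine in $c_{rs}$ with slope $\pm1$, so at most one value fails.
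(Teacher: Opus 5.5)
Your argument is correct and is essentially the paper's construction: a $d$-dimensional isotropic subspace of the image of $A_k$ in $V_j$, dual vectors, the anchor $a_0=\sum_s w_s$, and $a_r=u_r+\sum_{s<r}c_{rs}w_s$, with distinct $A_k$-cosets detected by pairing against $W$. The only difference is that the paper chooses the dual vectors mutually orthogonal (hyperbolic pairs), so every coefficient can be taken equal to $1$, whereas you allow arbitrary dual vectors and choose each $c_{rs}$ to avoid its single forbidden value.
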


\begin{proof}
Choose $y_1,\ldots,y_d\in A_k$ whose images span a $d$-dimensional isotropic subspace $Y\le V_j$. Extend $Y$ to hyperbolic pairs $(x_i,y_i)$ with
\[
\phi_j(x_i,y_h)=\delta_{ih},\qquad \phi_j(x_i,x_h)=0.
\]
In $V_j$ set
\[
a_0=y_1+\cdots+y_d,\qquad a_i=x_i+\sum_{h<i}y_h\quad(1\le i\le d).
\]
For $i<r$, one has $\phi_j(a_i,a_r)=1$, while $\phi_j(a_0,a_i)=-1$, so the set is a clique. Let $U$ be the full image of $A_k$ in $V_j$. Since $U$ is isotropic and $Y\le U$, no nonzero linear combination of the $x_i$ lies in $U$: pairing with $y_h$ recovers its $h$th coefficient. Hence the $x_i$ are linearly independent modulo $U$. Since all added $y_h$ lie in $U$, the displayed $a_i$ lie in pairwise distinct $U$-cosets. Lift them to $A_j$.
\end{proof}

\subsection{Weak interaction: nested-anchor composition}

Let $B$ be an upper bound for all conjugacy class sizes in $P$, and put $\ell=\lceil\log_pB\rceil$.

\begin{lemma}[Nested-anchor composition]\label{lem:nested}
For every branch of the central-factor recursion,
\begin{equation}\label{eq:nested}
n\ge \kappa_p\sum_{k=0}^{L-1}\rho_k-2\kappa_p\sum_{j<k}t_{j,k}-O\!\left(\kappa_pL^2\ell+c_pL\right).
\end{equation}
\end{lemma}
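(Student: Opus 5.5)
We build one pairwise noncommuting subset of $P$ level by level along the fixed branch $A_0\ge A_1\ge\cdots\ge A_L$, using the scalar forms $\phi_k$ of Lemma~\ref{lem:descent}, and glue the levels together with a nested version of Lemma~\ref{lem:anchor}.

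\textbf{Level cliques.} For each $k$, Lemma~\ref{lem:scalar-credit} applied to $(V_k,\phi_k)$ gives a pairwise nonorthogonal set $C_k\subseteq V_k$ with $|C_k|-1\ge\kappa_p\rho_k-c_p$. We lift it to a set $\widetilde C_k\subseteq A_k$ whose commutators are nontrivial modulo $K_{L-k-1}$. We fix one distinguished vertex $u_k\in\widetilde C_k$ to serve as the anchor.

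\textbf{Nested composition.} We proceed from the deepest level upward. Replace each non-anchor $x\in\widetilde C_k$ by the product $u_0u_1\cdots u_{k-1}\,x$, and keep the non-anchors of level $0$ as they are. We then check that any two elements coming from different levels $j<k$ fail to commute.

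Their commutator, read in $K_{L-j}/K_{L-j-1}$, should reduce to $\phi_j(\bar x,\bar u_j)\neq 0$, exactly as in the proof of Lemma~\ref{lem:anchor}. This reduction works only if the deeper factors $u_{j+1}\cdots u_{k-1}x$ pair trivially under $\phi_j$ with the level-$j$ clique.

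To arrange that, we do not draw the deeper cliques from all of $A_k$. We draw them from the subgroup $\bigcap_{j<k}\ker\psi_{j,k}$, which has index at most $p^{\sum_{j<k}t_{j,k}}$ in $A_k$. Passing to this subgroup costs rank: the radical of $\phi_k$ restricted to it grows by at most twice its codimension. So the restricted form has rank at least $\rho_k-2\sum_{j<k}t_{j,k}$. This is where the term $2\kappa_p\sum_{j<k}t_{j,k}$ comes from, after we apply Lemma~\ref{lem:scalar-credit} to the restricted form.

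\textbf{Main obstacle.} Two elements whose difference lies in a lower central factor can have commutators that are nonzero only modulo higher terms $K_{L-i}$ with $i<j$. They can also have commutators that vanish modulo $K_{L-j-1}$ but not exactly in $P$, and vice versa.

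To control noncommutation exactly, not just modulo $K_{L-j-1}$, we will pass to centralizers of the finitely many previously chosen elements. Each centralizer $C_P(g)$ has index at most $B\le p^{\ell}$, by Lemma~\ref{lem:bfc}.

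At level $k$ we centralize the $O(L)$ anchors and partial products chosen at earlier levels. This costs codimension $O(L\ell)$ in $A_k$, hence a rank loss of $O(L\ell)$ at that level. Summed over the $L$ levels, the loss is $O(L^2\ell)$, which gives the term $O(\kappa_pL^2\ell)$. The additive $c_p$ losses, one per level, give $O(c_pL)$.

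The hardest point is verifying that after this centralization the cross-level commutator equals the level-$j$ pairing exactly, with all higher-level contributions killed. My first attempt is an induction on $k$ maintaining the invariant that every chosen element of depth at least $j+1$ centralizes all anchors of depth at most $j$ modulo nothing, that is, in $P$ itself.

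Summing the credits then yields \eqref{eq:nested}:
\[
n\ge\sum_k\Bigl(\kappa_p\bigl(\rho_k-2\textstyle\sum_{j<k}t_{j,k}-O(L\ell)\bigr)-c_p\Bigr).
\]
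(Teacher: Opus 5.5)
Your proposal is correct and is essentially the paper's proof. At stage $k$ you restrict to $A_k\cap\bigcap_{j<k}\ker\psi_{j,k}\cap\bigcap_{j<k}C_P(u_j)$, which has index at most $p^{\sum_{j<k}t_{j,k}+k\ell}$, and you prefix the new vertices by $u_0\cdots u_{k-1}$. The invariant that later choices centralize earlier anchors exactly, together with $[x,yz]=[x,z][x,y]^z$ and the centrality of $K_{L-j}/K_{L-j-1}$ in $P/K_{L-j-1}$, then reduces every cross-level commutator to $[x,u_j]\not\equiv1\pmod{K_{L-j-1}}$.

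Two small corrections. Restricting a nondegenerate form to codimension $q$ enlarges the radical by at most $q$, not $2q$; that, together with the dimension drop of $q$, gives the rank loss $2q$ you use. Also, the stages must be built in increasing $k$, not ``from the deepest level upward'', because the anchors being centralized must already be fixed.
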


\begin{proof}
We recursively build a clique $S_k$ using the first $k+1$ stages and a distinguished element $z_k$. At stage $k$, restrict $A_k$ to
\[
H_k=A_k\cap\bigcap_{j<k}\ker\psi_{j,k}\cap\bigcap_{j<k}C_P(a_j),
\]
where the $a_j$ are the previously chosen stage anchors. By the preceding observation, each kernel $\ker\psi_{j,k}$ has index $p^{t_{j,k}}$ in $A_k$, while
\[
[A_k:A_k\cap C_P(a_j)]\le[P:C_P(a_j)]\le B.
\]
Therefore
\[
[A_k:H_k]\le p^{\sum_{j<k}t_{j,k}}B^k\le p^{q_k},\qquad q_k:=\sum_{j<k}t_{j,k}+k\ell.
\]
Passing to the image in $V_k$ cannot increase this index, so that image has codimension at most $q_k$. If an alternating form of rank $\rho$ is restricted to a subspace of codimension $q$, its radical can grow by at most $q$ and the ambient dimension drops by $q$; hence its rank drops by at most $2q$. By Lemma~\ref{lem:scalar-credit}, the image of $H_k$ in $V_k$ contains a pairwise nonorthogonal set of cardinality at least $\kappa_p(\rho_k-2q_k)-c_p+1$. Choose one representative in $H_k$ of each vector in this set; the resulting stage clique $C_k\subseteq H_k$ may be given a distinguished element $a_k\in C_k$ and satisfies
\[
|C_k|-1\ge\kappa_p(\rho_k-2q_k)-c_p.
\]
Set $S_0=C_0$, $z_0=a_0$, and inductively
\[
S_k=(S_{k-1}\setminus\{z_{k-1}\})\cup\{z_{k-1}c:c\in C_k\},\qquad z_k=z_{k-1}a_k.
\]
Every element of $C_k$ centralizes all previous anchors and hence $z_{k-1}$, so the new block is internally a clique.

Let $x\in S_{k-1}\setminus\{z_{k-1}\}$ have birth stage $r<k$. Then $x=z_{r-1}c_r$ with $c_r\in C_r\setminus\{a_r\}$, with the evident interpretation when $r=0$. Since all elements of $C_r$ centralize earlier anchors,
\[
[x,z_r]=[c_r,a_r]\not\equiv1\pmod{K_{L-r-1}}.
\]
For $s>r$, the anchor $a_s$ centralizes all earlier anchors and belongs to $\ker\psi_{r,s}$, so $[c_r,a_s]\in K_{L-r-1}$. Using $[x,yz]=[x,z][x,y]^z$ and the centrality of $K_{L-r}/K_{L-r-1}$ in $P/K_{L-r-1}$, induction gives
\[
[x,z_s]\equiv[c_r,a_r]\not\equiv1\pmod{K_{L-r-1}}.
\]
The same calculation with $c\in C_k$ shows
\[
[x,z_{k-1}c]\equiv[c_r,a_r]\not\equiv1\pmod{K_{L-r-1}}.
\]
Thus every old non-distinguished vertex is noncommuting with every vertex of the new block. The two parts of the union are disjoint because every new element commutes with $z_{k-1}$, while every old non-distinguished element does not. Therefore $S_k$ is a clique and
\[
|S_{L-1}|=1+\sum_k(|C_k|-1).
\]
Summing the stage-credit estimates gives \eqref{eq:nested}.
\end{proof}

\begin{corollary}[Selected-stage composition]\label{cor:selected}
For every subset $E\subseteq\{0,\ldots,L-1\}$,
\[
n\ge\kappa_p\sum_{k\in E}\rho_k-2\kappa_p\!\sum_{\substack{j<k\\ j,k\in E}}t_{j,k}-O\!\left(\kappa_p|E|^2\ell+c_p|E|\right).
\]
\end{corollary}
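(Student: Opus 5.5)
The plan is to rerun the construction in the proof of Lemma~\ref{lem:nested}, but only at the stages in $E$, listed as $k_1<k_2<\cdots<k_{|E|}$. At the other stages we choose nothing: no stage clique, no anchor, and no kernel restriction. The recursion is then indexed by $E$ alone.

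At a selected stage $k=k_i$ we restrict $A_k$ to
\[
H_k=A_k\cap\bigcap_{j\in E,\,j<k}\ker\psi_{j,k}\cap\bigcap_{j\in E,\,j<k}C_P(a_j).
\]
Its index in $A_k$ is at most $p^{q_k}$, where
\[
q_k=\sum_{j\in E,\,j<k}t_{j,k}+(i-1)\ell,
\]
since only $i-1\le|E|$ anchors have been chosen before stage $k$. The rank drops by at most $2q_k$, as in that proof. Lemma~\ref{lem:scalar-credit} then gives a stage clique $C_k$ with
\[
|C_k|-1\ge\kappa_p(\rho_k-2q_k)-c_p.
\]
We then compose with nested anchors, setting $z_{k_i}=z_{k_{i-1}}a_{k_i}$, exactly as before.

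The main thing to check is that the noncommutation verification still works when stages are skipped. Take an old vertex $x=z_{r^-}c_r$, born at a selected stage $r$, where $r^-$ is the previous selected stage. Later anchors $a_s$, for $s\in E$ with $s>r$, lie in $\ker\psi_{r,s}$ and centralize all earlier anchors. So the same identity $[x,yz]=[x,z][x,y]^z$, taken modulo $K_{L-r-1}$, gives
\[
[x,z_s]\equiv[c_r,a_r]\not\equiv 1 .
\]
It also gives $[x,z_{k^-}c]\equiv[c_r,a_r]$ for every $c$ in the new block. The argument uses only that anchors and block elements come from $A_s$ with $s>r$. These satisfy $A_s\le A_r$, and $\psi_{r,s}$ is defined on all of $A_s$ for any $s>r$, not just $s=r+1$. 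Skipped stages therefore cause no difficulty. The disjointness argument for the two parts of each union is unchanged.

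Summing gives
\[
n\ge|S|=1+\sum_{k\in E}(|C_k|-1).
\]
This is at least
\[
\kappa_p\sum_{k\in E}\rho_k-2\kappa_p\sum_{j<k,\ j,k\in E}t_{j,k}-2\kappa_p\ell\sum_{i\le|E|}(i-1)-c_p|E|,
\]
and the $\ell$ term is $O(\kappa_p|E|^2\ell)$. The only point needing care is confirming that the stage credit uses $\rho_k=\dim V_k$ on the given branch, which is unaffected by the selection. We also need $|C_k|\ge1$, handled by taking a singleton when the bound is nonpositive, which only helps.

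Alternatively, one could apply Lemma~\ref{lem:nested} formally with $\rho_k$ and $t_{j,k}$ suppressed at unselected stages. But the direct rerun above is cleaner, so it is the plan I would follow.
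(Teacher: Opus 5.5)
Your proposal is correct and is essentially the paper's proof: the paper also reruns the nested-anchor construction of Lemma~\ref{lem:nested} only at the stages in $E$, intersecting only the kernels $\ker\psi_{j,k}$ and centralizers $C_P(a_j)$ for earlier selected $j$. Your count $q_k=\sum_{j\in E,\,j<k}t_{j,k}+(i-1)\ell$ gives the $O(\kappa_p|E|^2\ell)$ term, and you correctly check that the birth-stage argument only needs each later anchor to lie in $\ker\psi_{r,s}$ and to centralize the earlier anchors. The ``formal'' alternative you mention at the end would not follow from the lemma's statement alone, since the inequality over all stages does not imply the one restricted to $E$; but you do not rely on it.
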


\begin{proof}
Run the construction of Lemma~\ref{lem:nested} only at indices in $E$. At a selected stage $k$, intersect only the kernels and centralizers corresponding to earlier selected indices. The birth-stage argument is unchanged.
\end{proof}

\subsection{Large interaction: exact centralization}

\begin{lemma}[Interaction product inequality]\label{lem:product}
Fix $j<k$. For every integer $1\le d\le t_{j,k}$,
\[
n\ge(d+1)\bigl(\kappa_p(\rho_k-2(d+1)\ell)-c_p+1\bigr)
\]
whenever the factor in parentheses is positive.
\end{lemma}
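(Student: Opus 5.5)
The plan is to build a noncommuting set shaped as a product. One factor is a transversal clique at level $j$, supplied by Lemma~\ref{lem:transversal}. The other is a single clique at level $k$ that centralizes every element of that transversal exactly. The level-$j$ factor separates the blocks from one another, and the level-$k$ factor separates elements within a block.

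\textbf{Step 1: the transversal clique.} Fix $1\le d\le t_{j,k}$. Lemma~\ref{lem:transversal} gives a clique $T=\{a_0,\dots,a_d\}\subseteq A_j$ with two properties:
\begin{itemize}[nosep]
\item the $a_i$ lie in pairwise distinct left cosets of $A_k$;
\item for $r\ne s$, $[a_r,a_s]\not\equiv1\pmod{K_{L-j-1}}$.
\end{itemize}

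\textbf{Step 2: exact centralization.} Put
\[
H=A_k\cap\bigcap_{i=0}^{d}C_P(a_i).
\]
Since $[P:C_P(a_i)]\le B\le p^{\ell}$, we get $[A_k:H]\le p^{(d+1)\ell}$. So the image of $H$ in $V_k$ has codimension at most $(d+1)\ell$. As in the proof of Lemma~\ref{lem:nested}, restricting $\phi_k$ to this image lowers the rank by at most $2(d+1)\ell$. Lemma~\ref{lem:scalar-credit} then gives a pairwise nonorthogonal set in the image with at least
\[
M:=\kappa_p\bigl(\rho_k-2(d+1)\ell\bigr)-c_p+1
\]
elements, and $M$ is positive by hypothesis. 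Lift this set to $C\subseteq H$. Then the elements of $C$ pairwise noncommute, even modulo $K_{L-k-1}$, and each of them commutes exactly with every $a_i$.

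\textbf{Step 3: the product set.} Let
\[
S=\{a_ic:0\le i\le d,\ c\in C\}.
\]
\begin{itemize}[nosep]
\item Size: the blocks $a_iC\subseteq a_iA_k$ lie in distinct cosets of $A_k$, and $c\mapsto a_ic$ is injective. Hence $|S|=(d+1)M$.
\item Same block: since $c$ and $c'$ commute with $a_i$, we have $[a_ic,a_ic']=[c,c']\ne1$.
\item Different blocks: take $r\ne s$. Work modulo $K_{L-j-1}$. There every commutator among elements of $A_j\supseteq A_k$ is central, because $[A_j,A_j]\le K_{L-j}$. So the commutator is bimultiplicative:
\[
[a_rc,a_sc']\equiv[a_r,a_s]\,[a_r,c']\,[c,a_s]\,[c,c'].
\]
Here $[a_r,c']=[c,a_s]=1$ by exact centralization. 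Also $[c,c']\in[A_k,A_k]\le K_{L-k}\le K_{L-j-1}$, since $k>j$. Hence the commutator is $\equiv[a_r,a_s]\not\equiv1$.
\end{itemize}
So $S$ is a clique, and $n\ge(d+1)M$, which is the claimed inequality.

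\textbf{Main obstacle.} The delicate step is the cross-block computation. We must not pass to $\ker\psi_{j,k}$, since that would cost index $p^{t_{j,k}}$ and wipe out the gain from large $t_{j,k}$. Exact centralization against only the $d+1$ anchors costs just $p^{(d+1)\ell}$. It removes the cross terms $[a_r,c']$, and the nesting $K_{L-k}\le K_{L-j-1}$ kills $[c,c']$ at level $j$.
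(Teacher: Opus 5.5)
Your proposal is correct and follows the paper's proof essentially step for step: the transversal clique $T$ from Lemma~\ref{lem:transversal}, exact centralization $H=A_k\cap\bigcap_i C_P(a_i)$ at index cost $B^{d+1}$, a scalar-credit clique lifted into $H$, and the product set with the same within-block and cross-block commutator analysis. The only differences are cosmetic. You check the cross-block case by bimultiplicativity modulo $K_{L-j-1}$, whereas the paper uses the exact identity $[ad,a'd']=[a,a'][d,d']$. Also, you should write $|S|=(d+1)|C|\ge(d+1)M$ rather than asserting equality.
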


\begin{proof}
Take the transversal clique $T=\{a_0,\ldots,a_d\}$ from Lemma~\ref{lem:transversal} and define
\[
H=A_k\cap\bigcap_{a\in T}C_P(a).
\]
Then $[A_k:H]\le B^{d+1}$, so the stage-$k$ form restricted to the image of $H$ in $V_k$ has rank at least $\rho_k-2(d+1)\ell$. By Lemma~\ref{lem:scalar-credit}, that image contains a pairwise nonorthogonal set of cardinality at least
\[
\kappa_p(\rho_k-2(d+1)\ell)-c_p+1.
\]
Choose one representative in $H$ of each vector in this set, and denote the resulting set of representatives by $D$. Then $D\subseteq H$ is a clique and
\[
|D|\ge\kappa_p(\rho_k-2(d+1)\ell)-c_p+1.
\]
All elements of $D$ centralize all elements of $T$. The product set $TD=\{ad:a\in T,d\in D\}$ has cardinality $(d+1)|D|$, because the elements of $T$ occupy distinct $A_k$-cosets. If $a\ne a'$, exact centralization gives
\[
[ad,a'd']=[a,a'][d,d'].
\]
Here $[a,a']\in K_{L-j}\setminus K_{L-j-1}$, whereas $[d,d']\in[A_k,A_k]\le K_{L-k}\le K_{L-j-1}$, so $[ad,a'd']\ne1$. If $a=a'$ and $d\ne d'$, then $[ad,ad']=[d,d']\ne1$. Hence $TD$ is a clique.
\end{proof}

\begin{corollary}[Expensive stages have small interaction]\label{cor:small-interaction}
There is an absolute constant $C_0$ such that, whenever
\[
\kappa_p\rho_k^2\ge C_0n\ell,
\]
one has, for every $j<k$,
\[
t_{j,k}=O\!\left(\frac{n}{\kappa_p\rho_k}\right),
\]
with an absolute implied constant.
\end{corollary}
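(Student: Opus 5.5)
The corollary should follow directly from Lemma~\ref{lem:product} by choosing $d$ optimally and using the hypothesis $\kappa_p\rho_k^2\ge C_0n\ell$ to keep the centralizer loss $2(d+1)\ell$ small compared with $\rho_k$. Fix $j<k$ and write $t=t_{j,k}$. The plan is to show that $t\le D$ for a threshold $D$ of order $n/(\kappa_p\rho_k)$. Since the claimed bound is trivial when $n/(\kappa_p\rho_k)\ge \rho_k$ (as $t\le\dim V_j$ is not obviously bounded by $\rho_k$, we instead argue by contradiction), we suppose $t> D$ and apply Lemma~\ref{lem:product} with $d=D:=\lceil 4n/(\kappa_p\rho_k)\rceil$, which is $\le t$.

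The key estimate is that, for this $d$, the loss term is at most half the rank: $2(d+1)\ell\le\rho_k/2$. Indeed $d+1\le 4n/(\kappa_p\rho_k)+2$, so
\[
2(d+1)\ell\le\frac{8n\ell}{\kappa_p\rho_k}+4\ell\le\frac{8\rho_k}{C_0}+4\ell,
\]
using $n\ell\le\kappa_p\rho_k^2/C_0$. For the term $4\ell$ we use the hypothesis again: $n\ge1$ and $\kappa_p\ge1$ give $\rho_k^2\ge C_0\ell/\kappa_p\cdot n\kappa_p/\kappa_p$. More carefully, $\kappa_p\rho_k^2\ge C_0 n\ell$ together with $n\ge\kappa_p\rho_k/2$ (a crude consequence of Lemma~\ref{lem:scalar-credit} applied at stage $k$ of one branch; if this is unavailable one uses $n\ge 1$ and $\rho_k\ge\sqrt{C_0\ell}$) yields $\rho_k\ge C_0\ell/2$. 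Hence $4\ell\le 8\rho_k/C_0$. With $C_0$ large (say $C_0\ge 64$), $2(d+1)\ell\le\rho_k/2$.

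Then the factor in Lemma~\ref{lem:product} is at least $\kappa_p\rho_k/2-c_p+1\ge\kappa_p\rho_k/4$, once $\kappa_p\rho_k\ge 4c_p$, which holds since $\rho_k$ is large by the previous step. This makes the factor positive, and Lemma~\ref{lem:product} gives
\[
n\ge (d+1)\frac{\kappa_p\rho_k}{4}>\frac{4n}{\kappa_p\rho_k}\cdot\frac{\kappa_p\rho_k}{4}=n,
\]
a contradiction. Therefore $t_{j,k}<D=O(n/(\kappa_p\rho_k))$ with an absolute constant.

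The main obstacle is the lower bound $\rho_k\gtrsim\ell$, needed to absorb the additive $4\ell$ and the constant $c_p$. The cleanest route is to note that the clique from Lemma~\ref{lem:scalar-credit} on $V_k$ lifts to a noncommuting set in $A_k\le P$, so $n\ge\kappa_p\rho_k-c_p+1$. Combined with the hypothesis, this gives $\kappa_p\rho_k^2\ge C_0\ell(\kappa_p\rho_k-c_p)$, hence $\rho_k\gtrsim C_0\ell$ whenever $\kappa_p\rho_k>2c_p$. The degenerate small-rank cases are handled by enlarging $C_0$.
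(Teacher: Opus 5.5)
Your proposal is correct and follows essentially the paper's proof: the same choice $d=\lceil 4n/(\kappa_p\rho_k)\rceil$ in Lemma~\ref{lem:product}, the same use of $n\ge\kappa_p\rho_k-c_p+1$ from Lemma~\ref{lem:scalar-credit} to get $\rho_k\gtrsim C_0\ell$ and absorb the additive $4\ell$, and the same contradiction, with your constants $1/2$ and $1/4$ in place of the paper's $1/3$. Your parenthetical fallback via $\rho_k\ge\sqrt{C_0\ell}$ does not follow from the hypothesis when $\kappa_p$ is large, but it is never needed: the scalar-credit bound is always available, and $\rho_k\ge2$ is forced by $n\ell>0$.
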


\begin{proof}
Put $d=\lceil4n/(\kappa_p\rho_k)\rceil$ and suppose $t_{j,k}\ge d$. The scalar-credit lemma applied to the full stage-$k$ form gives $n\ge\kappa_p\rho_k-c_p+1$. Since $\rho_k\ge2$, this implies uniformly that $n\ge(3/4)\kappa_p\rho_k$: for $p=2$ and $p\ge5$ the stronger inequality $n\ge\kappa_p\rho_k+1$ holds, while for $p=3$ one has $n\ge2\rho_k-1\ge(3/2)\rho_k=(3/4)\kappa_3\rho_k$. Hence the hypothesis gives $\rho_k\ge(3C_0/4)\ell$. Also
\[
2(d+1)\ell\le\frac{8n\ell}{\kappa_p\rho_k}+4\ell
\le\left(\frac8{C_0}+\frac{16}{3C_0}\right)\rho_k.
\]
Taking $C_0=100$ makes this at most $\rho_k/3$. Therefore
\[
\kappa_p(\rho_k-2(d+1)\ell)-c_p+1\ge\frac{\kappa_p\rho_k}{3}.
\]
By Lemma~\ref{lem:product},
\[
n\ge(d+1)\frac{\kappa_p\rho_k}{3}>\frac{4n}{3},
\]
a contradiction. Hence $t_{j,k}<d$.
\end{proof}

\section{The \texorpdfstring{$p$}{p}-group upper bound}

We now assemble the cover-tree estimate with the interaction bounds. The argument separates stages according to whether their symplectic rank is large enough to matter on the exponential scale. Expensive stages have tightly controlled cross-interaction, while cheap stages contribute only a lower-order error. This yields a uniform bound in the prime and identifies $p=2$ as the unique asymptotically extremal case.

The next theorem is the quantitative heart of the proof. Its prime-dependent coefficient records the ratio between logarithmic covering cost and the amount of clique forced by one unit of symplectic rank.

\begin{theorem}[Uniform $p$-group bound]\label{thm:pgroup}
Let $P$ range over finite nonabelian $p$-groups and put $n=\omega(P)$. Uniformly in the prime $p$,
\[
\log_2a(P)\le\alpha_p n+O\!\left(\frac{n}{\log(n+2)}\right),
\]
where
\[
\alpha_2=\frac12,\qquad \alpha_3=\frac{\log_2 3}{4},\qquad \alpha_p=\frac{\log_2 p}{p}\quad(p\ge5).
\]
In particular,
\[
\log_2a(P)\le\frac n2+O\!\left(\frac{n}{\log(n+2)}\right),
\]
with strict asymptotic slack for every odd prime.
\end{theorem}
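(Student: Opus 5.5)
We fix a root-to-leaf branch $B$ of the recursive cover tree realizing the maximum in Lemma~\ref{lem:branch}. By that lemma it suffices to show
\[
\frac{\log_2 p}{2}\sum_{k=0}^{L-1}\rho_k+L\le\alpha_p n+O\!\left(\frac{n}{\log(n+2)}\right)
\]
along $B$. Since $\alpha_p=\log_2 p/(2\kappa_p)$, the target is the clique inequality $\kappa_p\sum_k\rho_k\le n+O(\kappa_p n/(\log_2 p\,\log(n+2)))$, together with control of the additive $L$.

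For the parameters we use $B=B_N=(2n+1)^2$ from Lemma~\ref{lem:bfc}, so $\ell=O(\log n/\log p)$. For $L$ we use $p^L=|P'|$ and Corollary~\ref{cor:derived}, which give $L\log_2p=O((\log n)^2)$. Both quantities are negligible.

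\textbf{Step 1: split stages by a threshold.} We choose a threshold $\tau$ and call stage $k$ \emph{expensive} if $\rho_k\ge\tau$ and \emph{cheap} otherwise. Let $E$ be the set of expensive stages. The cheap stages contribute at most $\frac{\log_2p}{2}L\tau$ to the cost, which is $O(\tau(\log n)^2)$. We take $\tau$ of order $n/(\log n)^3$, or pick the optimized value later, so that this is $O(n/\log n)$. We also need $\kappa_p\tau^2\ge C_0 n\ell$, so that Corollary~\ref{cor:small-interaction} applies to every $k\in E$; with $\tau\approx n/(\log n)^3$ this holds for large $n$ because $\ell=O(\log n)$. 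Primes with $p>n$ are trivial: any nonzero $\rho_k$ forces $n\ge\kappa_p\rho_k\ge p$ by Lemma~\ref{lem:scalar-credit}. Hence all $\rho_k=0$ and only the negligible $L$ term remains.

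\textbf{Step 2: bound the expensive stages with Corollary~\ref{cor:selected}.} For $k\in E$, Corollary~\ref{cor:small-interaction} gives $t_{j,k}=O(n/(\kappa_p\rho_k))$. Applying Corollary~\ref{cor:selected} to $E$ gives
\[
\kappa_p\sum_{k\in E}\rho_k\le n+2\kappa_p\sum_{j<k\in E}t_{j,k}+O(\kappa_p|E|^2\ell+c_p|E|).
\]
Each expensive stage satisfies $\kappa_p\rho_k\le n+c_p$, and $\kappa_p\sum_E\rho_k$ is at most roughly $n+\text{error}$, which bootstraps to $|E|\le O(n/(\kappa_p\tau))$. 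More simply, $|E|\le L=O((\log n)^2/\log p)$. The interaction sum is at most $|E|\sum_{k\in E}O(n/\rho_k)\cdot$, roughly $O(|E|^2 n/\tau)$. With $|E|=O((\log n)^2)$ and $\tau\approx n/(\log n)^3$, this is $O((\log n)^7)$, which is polylogarithmic and hence negligible. Likewise $\kappa_p|E|^2\ell$ is polylogarithmic for $p\le n$.

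\textbf{Step 3: combine.} We multiply Step 2 by $\log_2p/(2\kappa_p)$ and add the cheap-stage cost and $L$. This yields $\alpha_p n+O(n/\log n)$, uniformly in $p$, because every factor $\log_2 p/\kappa_p$ is bounded. Remark~\ref{rem:alpha} then gives the final bound $n/2$ with strict slack for odd $p$.

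\textbf{Expected obstacle.} The main difficulty is the bookkeeping in Step 2, namely keeping the interaction penalty below the main term uniformly in $p$. The crude count $|E|\le L$ must be combined with the $t_{j,k}$ bound, and $\tau$ must simultaneously satisfy the hypothesis of Corollary~\ref{cor:small-interaction} and make the cheap-stage loss $O(n/\log n)$. I would first try $\tau=n/(\log(n+2))^3$, then check that the exponents of $\log n$ close.
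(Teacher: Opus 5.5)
Your proposal is correct. It shares the paper's skeleton: a cheap/expensive split of stages, Corollary~\ref{cor:small-interaction} on the expensive ones, Corollary~\ref{cor:selected} restricted to them, and Lemma~\ref{lem:branch} at the end. What differs is how you handle large primes.

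The paper splits on the size of the prime. For $p\le(\log n)^3$ it thresholds on the weighted rank, $\kappa_p\rho_k\ge n/(\log n)^{10}$. For $p>(\log n)^3$ it drops the interaction analysis and uses only the per-stage bound $\rho_k\le 2(n-1)/p$, which gives $O(Ln\log p/p)=O(n/\log n)$.

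You instead use a single unweighted threshold $\tau\approx n/(\log n)^3$ for every prime, together with $L\log_2p=\log_2|P'|=O((\log n)^2)$. This makes the cheap-stage cost $\tfrac{\log_2p}{2}L\tau=O(n/\log n)$ uniform in $p$. Large primes are then absorbed automatically: for $p>2(\log n)^3$ the bound $\rho_k\le 2(n-1)/p$ makes every stage cheap.

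Your route removes the case distinction and extends further. If you balance $(\log n)^2\tau$ against the normalized interaction loss $O((\log n)^4 n/\tau)$ with $\tau\asymp\sqrt n\log n$, you get $\alpha_p n+O(\sqrt n(\log n)^3)$ uniformly in $p$. That is the analogue of Proposition~\ref{prop:2group} for all primes at once. The paper's split, in exchange, makes it transparent that large primes contribute $o(n)$ for a trivial reason.

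One justification in Step~2 needs repair. The term $\kappa_p|E|^2\ell$ is not polylogarithmic merely because $p\le n$: the crude bound $\kappa_pL^2\ell$ grows linearly in $p$. There are two easy fixes.
\begin{itemize}
\item Observe that $E\neq\emptyset$ forces $\tau\le\rho_k\le2(n-1)/p$, hence $p\le2(\log n)^3$, so the term is polylogarithmic whenever it is nonzero.
\item Alternatively, keep the factor $\kappa_p$ until Step~3. Multiplying by $\log_2p/(2\kappa_p)$ turns it into $\tfrac{\log_2p}{2}|E|^2\ell=O((\log n)^5)$, because $|E|\log_2p\le L\log_2p=O((\log n)^2)$ and $\ell=O(\log n)$.
\end{itemize}
With either fix, the conclusion $\alpha_p n+O(n/\log(n+2))$ follows exactly as you state.
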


\begin{proof}
Fix an arbitrary root-to-leaf branch of the recursive cover, and use the associated quantities $\rho_k$ and $t_{j,k}$. All estimates below are uniform in the chosen branch, so Lemma~\ref{lem:branch} may be applied at the end. By Lemma~\ref{lem:bfc}, $P$ is a $B$-BFC group with $B=O(n^2)$. By Corollary~\ref{cor:derived},
\[
\log_2|P'|=O((\log n)^2),
\]
so the series \eqref{eq:central-series} has length
\[
L=O\!\left(\frac{(\log n)^2}{\log p}\right).
\]

First suppose $p\le(\log n)^3$. Put $Q=(\log n)^{10}$ and call a stage expensive if $\kappa_p\rho_k\ge n/Q$. The total weighted rank of cheap stages is at most
\[
L\frac nQ=O\!\left(\frac{n}{(\log n)^8}\right).
\]
For an expensive stage, $\rho_k\ge n/(\kappa_pQ)$. Since $p\le(\log n)^3$, we have $\kappa_p=O((\log n)^3)$ and $\ell=O(\log n)$. Consequently
\[
\frac{\kappa_p\rho_k^2}{n\ell}\ge\frac{n}{\kappa_pQ^2\ell}\longrightarrow\infty,
\]
because $\kappa_pQ^2\ell=O((\log n)^{24})$. Thus, for all sufficiently large $n$, the hypothesis of Corollary~\ref{cor:small-interaction} holds uniformly in $p$ and in the branch. Thus $t_{j,k}=O(Q)$ for every interaction entering an expensive stage. There are $O(L^2)$ pairs, so
\[
\kappa_p\sum_{\substack{j<k\\ k\text{ expensive}}}t_{j,k}=o(n).
\]
The anchor loss $O(\kappa_pL^2\ell+c_pL)$ is also $o(n)$. Applying Corollary~\ref{cor:selected} to the expensive stages gives
\[
\sum_{k\text{ expensive}}\kappa_p\rho_k
\le n+O\!\left(\frac{n}{(\log n)^8}+(\log n)^{17}\right).
\]
Adding the cheap stages gives the same bound for $\sum_k\kappa_p\rho_k$. By Lemma~\ref{lem:branch},
\[
\log_2a(P)\le\frac{\log_2p}{2\kappa_p}
\left(n+O\!\left(\frac{n}{(\log n)^8}+(\log n)^{17}\right)\right)+O(L)
=\alpha_p n+O\!\left(\frac{n}{\log(n+2)}\right).
\]

Now suppose $p>(\log n)^3$. Since $P$ is nonabelian, some stage has positive rank; Lemma~\ref{lem:scalar-credit} then gives a clique of size at least $p+1$, so $p<n$. Every positive-rank stage has an individual clique of size at least $(p/2)\rho_k+1$, hence $\rho_k\le2(n-1)/p$. Therefore
\[
\log_2a(P)\le O\!\left(L\frac{n\log p}{p}\right)
=O\!\left(\frac{n(\log n)^2}{p}\right)
=O\!\left(\frac{n}{\log n}\right).
\]
All implied constants are absolute.
\end{proof}

\subsection{Quantitative optimization in the critical 2-group case}

The preceding proof was organized only to obtain a uniform $o(n)$ error. In the critical case $p=2$, the same structural lemmas give a substantially sharper estimate after optimizing the threshold separating cheap and expensive stages.

\begin{proposition}[Quantitative 2-group bound]\label{prop:2group}
Let $P$ be a finite $2$-group and put $n=\omega(P)$. Then
\[
\log_2a(P)\le\frac n2+O\!\left(\sqrt n\,(\log(n+2))^3\right),
\]
with an absolute implied constant.
\end{proposition}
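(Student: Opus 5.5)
Fix a root-to-leaf branch of the recursive cover, with ranks $\rho_k$ and interaction ranks $t_{j,k}$, and specialize the argument of Theorem~\ref{thm:pgroup} to $p=2$. Here $\kappa_2=1$ and $c_2=0$. By Lemma~\ref{lem:bfc} we may take $B=O(n^2)$, so $\ell=O(\log n)$. By Corollary~\ref{cor:derived}, $L=O((\log n)^2)$. The only change from the proof of Theorem~\ref{thm:pgroup} is that the cheap/expensive threshold becomes a free parameter $\tau$: a stage is \emph{expensive} if $\rho_k\ge\tau$ and \emph{cheap} otherwise. We then optimize $\tau$. Lemma~\ref{lem:branch} reduces everything to the bound
\[
\sum_k\rho_k\le n+O\bigl(\sqrt n(\log n)^3\bigr),
\]
because the additive term $L$ in that lemma is only $O((\log n)^2)$.

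The cheap stages contribute at most $L\tau$ in total. For the expensive stages we apply Corollary~\ref{cor:small-interaction}. Its hypothesis $\rho_k^2\ge C_0 n\ell$ holds as soon as $\tau\ge\sqrt{C_0n\ell}$, so we choose $\tau\asymp\sqrt{n\log n}$. The corollary then gives $t_{j,k}=O(n/\rho_k)=O(n/\tau)=O(\sqrt{n/\log n})$ for every $j<k$ with $k$ expensive. Next apply Corollary~\ref{cor:selected} with $E$ the set of expensive stages. It yields
\[
\sum_{k\in E}\rho_k\le n+2\sum_{j<k,\;j,k\in E}t_{j,k}+O(L^2\ell).
\]
There are at most $L^2$ pairs, so the interaction term is $O(L^2\, n/\tau)$ and the anchor term is $O(L^2\ell)$. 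Collecting everything,
\[
\sum_k\rho_k\le n+O\Bigl(L\tau+L^2\frac n\tau+L^2\ell\Bigr).
\]
With $\tau\asymp\sqrt{n\log n}$ and $L=O((\log n)^2)$, the three error terms are $O(\sqrt n(\log n)^{5/2})$, $O(\sqrt n(\log n)^{7/2})$ and $O((\log n)^5)$.

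This leaves half a logarithm too many in the interaction term. I expect that to be the main obstacle: balancing $L\tau$ against $L^2n/\tau$ naively gives $\tau=\sqrt{nL}$, hence an error $L^{3/2}\sqrt n=\sqrt n(\log n)^3$. The constraint $\tau\ge\sqrt{C_0n\ell}$ from Corollary~\ref{cor:small-interaction} is then automatically met, since $L\gg\ell$. So the right choice is $\tau=\sqrt{nL}\asymp\sqrt n\log n$. The cheap stages then contribute $L\tau=O(\sqrt n(\log n)^3)$, and the interactions contribute $O(L^2n/\tau)=O(L^{3/2}\sqrt n)=O(\sqrt n(\log n)^3)$. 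The anchor term $O(L^2\ell)$ is only polylogarithmic.

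For small $n$ the estimate is trivial after enlarging the implied constant, using Pyber's exponential bound or Lemma~\ref{lem:branch} together with $\rho_k\le n$. The remaining check is that all implied constants in Corollaries~\ref{cor:small-interaction} and~\ref{cor:selected} are absolute when $p=2$. Once that is confirmed, Lemma~\ref{lem:branch} gives
\[
\log_2a(P)\le\tfrac12\bigl(n+O(\sqrt n(\log n)^3)\bigr)+O(L),
\]
which is the statement.
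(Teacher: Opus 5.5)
Your proposal is correct and is essentially the paper's proof. It uses the same threshold split of the stages by $\rho_k$, Corollary~\ref{cor:small-interaction} to get $t_{j,k}=O(n/\tau)$ on the expensive stages, Corollary~\ref{cor:selected} applied only to those stages, and the balance of $L\tau$ against $L^2n/\tau$ at $\tau\asymp\sqrt n\,\log(n+2)$.

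The one thing to tidy is your justification that the constraint holds because $L\gg\ell$. The actual series length $L$ can be as small as $1$ (for example in extraspecial groups, which are the extremal case), and then $\sqrt{nL}$ falls below $\sqrt{C_0n\ell}$. So $\tau$ must be set from the upper bound $L=O((\log(n+2))^2)$, i.e.\ $\tau=A\sqrt n\,\log(n+2)$. Alternatively, do as the paper does and take $R=A\sqrt{n(L+\ell)}$, which guarantees $\rho_k^2\ge C_0n\ell$ on the selected stages whatever the value of $L$.
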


\begin{proof}
Fix a root-to-leaf branch of the recursive cover and retain the notation $\rho_k,t_{j,k},L$ and $\ell$ from Section~5. By Lemma~\ref{lem:branch},
\begin{equation}\label{eq:61}
\log_2a(P)\le\frac12\sum_{k=0}^{L-1}\rho_k+L.
\end{equation}
By Corollary~\ref{cor:derived}, $L=O((\log(n+2))^2)$, while Lemma~\ref{lem:bfc} gives $\ell=O(\log(n+2))$.

Choose
\[
R=A\sqrt{n(L+\ell)},
\]
where $A$ is a sufficiently large absolute constant, and let $E=\{k:\rho_k\ge R\}$. The complementary stages contribute at most
\begin{equation}\label{eq:62}
\sum_{k\notin E}\rho_k\le LR.
\end{equation}
For $k\in E$, the choice of $A$ ensures $\rho_k^2\ge C_0n\ell$, so Corollary~\ref{cor:small-interaction} applies and gives, uniformly for $j<k$,
\begin{equation}\label{eq:63}
t_{j,k}=O\!\left(\frac n{\rho_k}\right)=O\!\left(\frac nR\right).
\end{equation}
Apply Corollary~\ref{cor:selected} only to the selected stages $E$. Since $\kappa_2=1$ and $c_2=0$,
\[
n\ge\sum_{k\in E}\rho_k-2\!\sum_{\substack{j<k\\j,k\in E}}t_{j,k}-O(L^2\ell).
\]
There are $O(L^2)$ selected pairs; hence \eqref{eq:63} yields
\begin{equation}\label{eq:64}
\sum_{k\in E}\rho_k\le n+O\!\left(\frac{L^2n}{R}+L^2\ell\right).
\end{equation}
Combining \eqref{eq:62} and \eqref{eq:64},
\begin{equation}\label{eq:65}
\sum_k\rho_k\le n+O\!\left(LR+\frac{L^2n}{R}+L^2\ell\right).
\end{equation}
With $R=A\sqrt{n(L+\ell)}$, $L=O((\log(n+2))^2)$ and $\ell=O(\log(n+2))$, every error term in \eqref{eq:65} is
\[
O\!\left(\sqrt n\,(\log(n+2))^3\right);
\]
the purely polylogarithmic term $L^2\ell$ is smaller for large $n$. Substitution in \eqref{eq:61} proves the proposition.
\end{proof}

The square-root scale in Proposition~\ref{prop:2group} reflects the present pairwise interaction estimate $t_{j,k}=O(n/\rho_k)$: the losses $LR$ and $L^2n/R$ balance at square-root order.

\section{Nilpotent groups}

The passage from $p$-groups to nilpotent groups uses the direct-product structure of Sylow subgroups. Taking products of abelian covers gives a multiplicative upper bound for the covering number, whereas Cartesian products of pairwise noncommuting sets give a multiplicative lower bound for the clique number. This mismatch is favourable: if more than one Sylow subgroup is nonabelian, the resulting estimate has substantial slack.

The product structure now turns the $p$-group estimates into a nilpotent estimate. The second conclusion is useful conceptually: asymptotic extremality can only come from groups with essentially one nonabelian Sylow component.

\begin{theorem}[Nilpotent upper bound]\label{thm:nilpotent}
Uniformly for finite nilpotent groups $F$ with $n=\omega(F)$,
\[
\log_2a(F)\le\frac n2+o(n).
\]
If at least two Sylow subgroups of $F$ are nonabelian, then
\[
\log_2a(F)\le\frac n3+o(n).
\]
\end{theorem}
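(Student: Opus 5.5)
Write $F=P_1\times\cdots\times P_s$ as the direct product of its Sylow subgroups, and put $n_i=\omega(P_i)$, with $n_i=1$ exactly when $P_i$ is abelian (a singleton is trivially pairwise noncommuting). The plan is to bound $a(F)$ above by a product of the $a(P_i)$, bound $\omega(F)$ below by a product of the $n_i$, and then compare the two with Theorem~\ref{thm:pgroup}.

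\emph{Upper bound.} If $\mathcal A_i$ is an abelian cover of $P_i$, then the products $A_1\times\cdots\times A_s$ with $A_i\in\mathcal A_i$ are abelian and cover $F$. Hence $a(F)\le\prod_i a(P_i)$, and we may ignore the abelian factors since $a(P_i)=1$ for them.

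\emph{Lower bound.} If $C_i\subseteq P_i$ is pairwise noncommuting, then $C_1\times\cdots\times C_s$ is pairwise noncommuting in $F$. Indeed, two distinct tuples differ in some coordinate $i$, and commutators are computed coordinatewise, so their commutator is nontrivial in $P_i$. Therefore $n\ge\prod_i n_i$. In particular $n_i\le n$ for every $i$, and if two factors are nonabelian, say $n_1,n_2\ge2$, then $n_1+n_2\le n_1n_2\le n$. More generally, for the nonabelian factors $\sum n_i\le\prod n_i\le n$, because every $n_i\ge 2$.

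\emph{Combining.} By Theorem~\ref{thm:pgroup},
\[
\log_2a(F)\le\sum_{i\,:\,P_i\text{ nonabelian}}\Bigl(\alpha_{p_i}n_i+O\bigl(n_i/\log(n_i+2)\bigr)\Bigr).
\]
Since $\alpha_p\le\frac12$ and $\sum n_i\le n$, this gives $\frac n2+O(\sum n_i/\log(n_i+2))$. The main obstacle is to show this error is $o(n)$ uniformly, because the number of nonabelian factors $s'$ can grow. We have $2^{s'}\le\prod n_i\le n$, so $s'\le\log_2 n$. Choose a threshold $T=\sqrt n$. The factors with $n_i\le T$ contribute at most $s'T\cdot O(1)=O(\sqrt n\log n)$ in total, counting both the main term and the error, since $\alpha_p n_i\le n_i$. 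The factors with $n_i>T$ contribute an error of $O(\sum n_i/\log T)=O(n/\log n)$. Thus $\log_2 a(F)\le\frac n2+o(n)$.

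For the second claim, suppose at least two factors are nonabelian, and let $M=\max n_i$. Then $\prod n_i\ge 2M$, because the product contains $M$ together with at least one other factor that is at least $2$. Hence $M\le n/2$. Next, $\sum n_i$ is maximized subject to $\prod n_i\le n$, $n_i\ge2$, $s'\ge2$ by taking $n_1=2$ and $n_2=n/2$. This is a convexity argument: $\log$ is concave, so the sum is extremal when the factors are as unbalanced as possible. It gives $\sum n_i\le n/2+2+O(\log n)$, where the last term accounts for any additional factors equal to $2$. However, the coefficient $\frac12$ applied to $n/2$ would only give $n/4$, and $\frac n3$ is even easier, so we must also check the small factors. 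A factor with $n_i=2$ cannot occur, since a nonabelian group has $\omega\ge3$; so every nonabelian factor has $n_i\ge3$. Maximizing $\sum n_i$ under $\prod n_i\le n$ with all $n_i\ge3$ then gives $\sum n_i\le n/3+O(\log n)$. The same error-term treatment as above yields $\log_2a(F)\le\frac12(n/3)+o(n)\le\frac n3+o(n)$, which is in fact stronger than stated.
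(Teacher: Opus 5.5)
Your proposal is correct and follows essentially the same route as the paper: the Sylow direct-product decomposition, $a(F)\le\prod_i a(P_i)$ and $\omega(F)\ge\prod_i n_i$, Theorem~\ref{thm:pgroup} applied factorwise, and a threshold split that keeps the error uniform in the number of factors (your $T=\sqrt n$ plays the role of the paper's fixed $R$ followed by $R\to\infty$). The only real difference is in the second claim: you use the sharp bound $\sum_i n_i\le\frac13\prod_i n_i+3$ for $n_i\ge3$, which gives $n/6+o(n)$, whereas the paper uses $\sum_i n_i\le\frac23\prod_i n_i$; your sharp bound is better justified by $(x-3)(y-3)\ge0$ plus induction than by the convexity heuristic, and your detour through $n_i\ge2$ is unnecessary but harmless, since $\sum_i n_i\le n/2+2$ already yields $n/4<n/3$.
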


\begin{proof}
Write the product of the nonabelian Sylow subgroups as
\[
P_1\times\cdots\times P_s,\qquad n_i=\omega(P_i)\ge3.
\]
If $s=0$, then $F$ is abelian and $a(F)=1$, so the first assertion is immediate (and the second is vacuous). Assume henceforth that $s\ge1$. Abelian Sylow factors do not change either $a$ or $\omega$: adjoining an abelian direct factor preserves commutation, and projection onto the nonabelian factors sends every abelian cover to an abelian cover. Taking products of abelian covers gives
\[
a(F)\le\prod_i a(P_i),
\]
while Cartesian products of pairwise noncommuting sets give
\[
n\ge\prod_i n_i.
\]
Hence
\[
\log_2a(F)\le\sum_i\log_2a(P_i).
\]
If $s=1$, apply Theorem~\ref{thm:pgroup}. If $s\ge2$, then for integers $n_i\ge3$,
\[
\sum_i n_i\le\frac23\prod_i n_i\le\frac23n.
\]
Since every coefficient in Theorem~\ref{thm:pgroup} is at most $1/2$,
\[
\log_2a(F)\le\frac12\sum_i n_i+o(n)\le\frac n3+o(n).
\]
For uniformity of the error, write Theorem~\ref{thm:pgroup} as
\[
\log_2a(P_i)\le\frac12n_i+\varepsilon(n_i)n_i+C,
\]
where $\varepsilon(t)\to0$ uniformly in the prime. Fix $R$. Factors with $n_i>R$ contribute at most $\sup_{t>R}\varepsilon(t)\sum_i n_i$, while factors with $n_i\le R$ contribute $O_R(s)$. Since $s\le\log_3n$ and $\sum_i n_i\le n$, first let $n\to\infty$ and then $R\to\infty$.
\end{proof}

\begin{remark}\label{rem:nilpotent-uniform}
The error in Theorem~\ref{thm:pgroup} is uniform in the prime. Consequently the $o(n)$ in Theorem~\ref{thm:nilpotent} is uniform over all finite nilpotent groups: there is a function $\varepsilon(n)\to0$ such that the error is at most $\varepsilon(n)n+O(1)$.
\end{remark}

\section{Extension from a nilpotent normal subgroup}

It remains to recover an arbitrary finite group from a large nilpotent normal subgroup. We use the centralizer of the derived subgroup. The resulting extension cost is quasipolynomial rather than polynomial, but remains negligible on the scale of the main theorem.

\begin{lemma}[Coset domination]\label{lem:coset-domination}
Let $F\trianglelefteq G$, let $X=gF$, and suppose every conjugacy class in $G$ has size at most $B$. Put
\[
Z_0=F\cap Z(G),\qquad M=[F:Z_0].
\]
Then
\[
a(X)\le B(1+\ln M)a(F),
\]
where $a(X)$ denotes the least number of abelian subgroups of $G$ whose union contains $X$.
\end{lemma}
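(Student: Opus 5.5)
The plan is to reduce the covering of $X$ to a set-cover problem with ground set $X/Z_0$, show that its fractional cover number is at most $B\,a(F)$, and then pass to an integral cover by the greedy algorithm, which costs the factor $1+\ln M$.

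\emph{Candidate sets.} Fix an abelian cover $A_1,\dots,A_a$ of $F$ with $a=a(F)$. Replace each $A_i$ by $A_iZ_0$. This is still abelian and contained in $F$, because $Z_0\le Z(G)$. For $y\in X$ and $1\le i\le a$, the subgroup
\[
\langle y,\;A_i\cap C_F(y)\rangle
\]
is abelian, since $A_i$ is abelian and every element of $A_i\cap C_F(y)$ commutes with $y$. This subgroup contains the subset
\[
S_{y,i}=y\,(A_i\cap C_F(y))\subseteq X.
\]
Since $Z_0$ is central in $G$, it lies in $C_F(y)$, so $S_{y,i}$ is a union of $Z_0$-cosets. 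We regard it as a subset of the ground set $\Omega=X/Z_0$, which has exactly $M$ points. It therefore suffices to cover $\Omega$ by few sets $S_{y,i}$.

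\emph{Fractional cover.} Give every pair $(y,i)\in X\times\{1,\dots,a\}$ weight $1/w$, with $w$ chosen below. Fix $x\in X$. I claim that $x\in S_{y,i}$ exactly when $y=xz^{-1}$ for some $z\in A_i\cap C_F(x)$. Indeed, if $x=yz$ with $z\in F$, then $z$ commutes with $y$ if and only if it commutes with $x=yz$.

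Hence the number of pairs $(y,i)$ covering $x$ is $\sum_i|A_i\cap C_F(x)|$. Because the $A_i$ cover $F$, this sum is at least $|C_F(x)|$. The bounded class size gives $[G:C_G(x)]\le B$, and intersecting with $F$ yields $[F:C_F(x)]\le B$. So every $x$ is covered by at least $|F|/B$ pairs.

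Taking $w=|F|/B$ therefore gives a fractional cover of total weight
\[
\frac{|X|\,a}{|F|/B}=B\,a(F).
\]
This bound is unchanged after passing to $\Omega$, since each point of $\Omega$ inherits at least the same covering multiplicity.

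\emph{Greedy rounding.} Lovász's greedy bound states that the integral cover number is at most $(1+\ln s)$ times the fractional cover number, where $s$ is the largest set size. Here every set has at most $|\Omega|=M$ points, so some family of at most $B(1+\ln M)a(F)$ sets $S_{y,i}$ covers $\Omega$. Their preimages cover $X$, and each lies in an abelian subgroup $\langle y,A_i\cap C_F(y)\rangle$, which proves the lemma.

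The step I expect to need the most care is the bookkeeping modulo $Z_0$. The $\ln$ factor must involve $M$ rather than $|F|$, which requires replacing $A_i$ by $A_iZ_0$ and checking that the sets are genuinely $Z_0$-saturated. I would also verify that the multiplicity count survives the quotient: each $Z_0$-coset is covered by at least as many pairs as any one of its elements.
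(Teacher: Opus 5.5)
Your proof is correct and reaches the same bound, but it organizes the counting differently from the paper.

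\textbf{The paper's route.} It works with the commuting graph on $X/Z_0$, whose closed neighbourhoods are the centralizer cosets $xC_F(x)/Z_0$, each of size at least $M/B$. It proves inline, by random selection plus alteration, that this graph has a dominating set $D$ with $|D|\le B(1+\ln M)$. Only afterwards does it split each $xC_F(x)$, $x\in D$, into the $a(F)$ pieces $x(A_i\cap C_F(x))$.

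\textbf{Your route.} You put the finer sets $y(A_i\cap C_F(y))$ directly into a single set-cover instance. Double counting gives a uniform fractional cover of weight $B\,a(F)$; your observation that $z$ commutes with $y$ iff it commutes with $yz$ is exactly what identifies the pairs covering $x$. You then round with the Lov\'asz greedy bound.

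\textbf{What each buys.} The paper's argument is self-contained and needs no saturation step, because $xC_F(x)$ automatically contains $Z_0$. Yours relies on an external rounding theorem, and because your sets are finer you must replace $A_i$ by $A_iZ_0$; you handle this correctly. In return, your formulation is slightly sharper: the logarithmic factor can be taken as $1+\ln s$, where $s$ is the largest size of a set $(A_iZ_0\cap C_F(y))/Z_0$, and this may be smaller than $M$.

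Conceptually the two are the same mechanism. The paper's dominating-set estimate is itself a rounding of the uniform fractional domination of weight $1/(\delta+1)\le B/M$ per vertex.
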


\begin{proof}
Consider the commuting graph on $X/Z_0$. For $x\in X$, its closed neighbourhood is $xC_F(x)/Z_0$, whose size is at least $M/B$, because
\[
[F:C_F(x)]\le[G:C_G(x)]\le B.
\]
A graph on $M$ vertices with minimum closed-neighbourhood size at least $M/B$ has a dominating set of size at most $B(1+\ln M)$. Indeed, if $\delta$ is the minimum degree, select each vertex independently with probability
\[
q=\min\left\{1,\frac{\ln(\delta+1)}{\delta+1}\right\}
\]
and then add every vertex not dominated by the selected set. The expected resulting size is at most
\[
Mq+M(1-q)^{\delta+1}\le\frac{M}{\delta+1}(1+\ln(\delta+1))\le B(1+\ln M).
\]
Thus such a dominating set $D\subseteq X$ exists, and
\[
X=\bigcup_{x\in D}xC_F(x).
\]
Take an abelian cover $F=\bigcup_{i=1}^{a(F)}A_i$. For each $x\in D$,
\[
C_F(x)=\bigcup_i(A_i\cap C_F(x)).
\]
Since $x$ centralizes $A_i\cap C_F(x)$, the subgroup $\langle x,A_i\cap C_F(x)\rangle$ is abelian and contains $x(A_i\cap C_F(x))$. Hence each $xC_F(x)$ is covered by at most $a(F)$ abelian subgroups.
\end{proof}

The preceding lemma applies uniformly to every coset of a normal subgroup. We use it with the centralizer of the derived subgroup; this avoids any appeal to commuting-probability bounds or to the Classification of Finite Simple Groups.

\begin{theorem}[Reduction to a nilpotent normal subgroup]\label{thm:reduction}
Let $G$ be a finite group with $N=\omega(G)$, and put
\[
H=C_G(G').
\]
Then $H\trianglelefteq G$ is nilpotent of class at most two, and
\[
\log_2a(G)\le\log_2a(H)+O\!\left((\log(N+2))^4\right).
\]
\end{theorem}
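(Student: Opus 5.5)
The proof has three parts: the structure of $H=C_G(G')$, a bound on $[G:H]$, and a coset-by-coset application of Lemma~\ref{lem:coset-domination}.

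\emph{Structure of $H$.} $H$ is normal because $G'$ is characteristic, so its centralizer is normal. Since $H'\le G'$ and $H$ centralizes $G'$, we get $H'\le Z(H)$. Hence $H$ is nilpotent of class at most two.

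\emph{The index $[G:H]$.} By the $N/C$ theorem, $G/C_G(G')$ embeds in $\Aut(G')$. Corollary~\ref{cor:derived} gives $|G'|\le 2^{C(\log_2(N+2))^2}$, so $\log_2|G'|=k=O((\log(N+2))^2)$. Any automorphism is determined by its values on a generating set of at most $k$ elements, and there are at most $|G'|$ choices for each value. Thus
\[
|\Aut(G')|\le |G'|^{\log_2|G'|},\qquad \log_2[G:H]\le(\log_2|G'|)^2=O((\log(N+2))^4).
\]

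\emph{Covering coset by coset.} Write $G$ as the union of the $[G:H]$ cosets $gH$ and apply Lemma~\ref{lem:coset-domination} with $F=H$ to each one. Lemma~\ref{lem:bfc} gives $B=(2N+1)^2$, so each coset is covered by at most $B(1+\ln M)a(H)$ abelian subgroups, where $M=[H:H\cap Z(G)]$. Summing over cosets gives
\[
a(G)\le [G:H]\,B\,(1+\ln M)\,a(H).
\]

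\emph{Bounding $M$.} This is where I expect the main obstacle. We need $\log_2\ln M$ to be polylogarithmic, i.e.\ $\log M$ bounded by a quasipolynomial in $N$. The plan is $M\le[G:Z(G)]$ together with a bound on $[G:Z(G)]$. Because $G$ is finite and every class has size at most $B$, pick $x_1,\dots,x_r$ generating $G$ modulo $Z(G)$ as centralizer-intersection witnesses. Each step of cutting down $\bigcap C_G(x_i)$ at least halves the index unless it stabilises, so about $\log_2[G:Z(G)]$ steps suffice, and $Z(G)=\bigcap_i C_G(x_i)$ has index at most $B^{r}$. This is circular, so instead I will use a known bound: in a BFC group with class sizes at most $B$, $[G:Z(G)]$ is bounded by $B^{O(\log B)}$ in the style of Neumann--Vaughan-Lee. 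I would first try to derive this from Corollary~\ref{cor:derived} combined with the fact that $G/Z(G)$ embeds via $x\mapsto([x,g_i])_i$ into $(G')^{r}$, with $r$ bounded through a minimal generating set of $G$ modulo its centre.

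An easier alternative avoids $M$ entirely. Use Pyber's bound $[G:Z(G)]\le c^{N}$ as in the introduction \cite{Pyber1987}. Then $\ln M=O(N)$, and the factor $\log_2(1+\ln M)=O(\log N)$ is harmless. With this choice,
\[
\log_2a(G)\le\log_2a(H)+O((\log(N+2))^4)+O(\log(N+2)),
\]
which is the stated bound.
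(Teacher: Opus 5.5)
Your final argument is the paper's proof: $H=C_G(G')$ is normal of class at most two, $G/H\hookrightarrow\Aut(G')$ with $|\Aut(G')|\le|G'|^{\log_2|G'|}$, coset domination is applied on each $H$-coset with $B=(2N+1)^2$, and $M\le[G:Z(G)]\le c^N$ by Pyber, so that $B(1+\ln M)=N^{O(1)}$. Note, though, that the route you abandoned is false, not merely harder: bounded class size does not bound $[G:Z(G)]$ (extraspecial $2$-groups have every class of size at most $2$ but $[E_m:Z(E_m)]=2^{2m}=2^{N-1}$), so Pyber's bound, which uses $\omega(G)=N$ itself, is genuinely needed, and it bounds $M$ rather than avoiding it.
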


\begin{proof}
Since $G'$ is characteristic, $H\trianglelefteq G$. Moreover $H'\le G'$, while $H$ centralizes $G'$. Hence $H'\le Z(H)$, so $H$ is nilpotent of class at most two.

Conjugation on $G'$ induces an embedding
\[
G/H\hookrightarrow\Aut(G').
\]
If $m=|G'|$, a finite group of order $m$ has a generating set of size at most $\log_2m$, and an automorphism is determined by the images of such a generating set. Thus
\[
|\Aut(G')|\le m^{\log_2m}.
\]
By Corollary~\ref{cor:derived}, $\log_2m=O((\log(N+2))^2)$, whence
\[
\log_2[G:H]=O((\log(N+2))^4).
\]
Apply Lemma~\ref{lem:coset-domination} to each $H$-coset. By Lemma~\ref{lem:bfc}, every conjugacy class has size at most $B=(2N+1)^2$. For $Z_0=H\cap Z(G)$, Pyber's theorem \cite{Pyber1987} gives
\[
M=[H:Z_0]\le[G:Z(G)]\le C^N
\]
for an absolute constant $C$. Hence $B(1+\ln M)=N^{O(1)}$. Therefore
\[
a(G)\le[G:H]N^{O(1)}a(H),
\]
and taking binary logarithms proves the theorem.
\end{proof}

\section{Completion of the proof}

All ingredients are now in place. The finite reduction confines the problem to finite groups; the nilpotent estimates control $H=C_G(G')$; and the preceding section shows that the extension from $H$ to the whole group costs only $2^{O((\log N)^4)}$. Combining these estimates with the extraspecial construction gives matching exponential upper and lower bounds.

\begin{proof}[Proof of Theorem~\ref{thm:main}]
By Lemma~\ref{lem:finite-reduction}, it suffices to consider finite groups. Let $G$ be finite, put $N=\omega(G)$, and let $H=C_G(G')$. By Theorem~\ref{thm:reduction}, $H$ is nilpotent and
\[
\log_2a(G)\le\log_2a(H)+O((\log(N+2))^4).
\]
We now distinguish the possible nonabelian Sylow structure of $H$.

If $H$ is abelian, then $a(H)=1$, and the required estimate is immediate. If $H$ has a unique nonabelian Sylow subgroup and it is a $2$-group, Proposition~\ref{prop:2group} gives
\[
\log_2a(H)\le\frac{\omega(H)}2+O\!\left(\sqrt N\,(\log(N+2))^3\right)
\le\frac N2+O\!\left(\sqrt N\,(\log(N+2))^3\right).
\]
If the unique nonabelian Sylow subgroup has odd order, Theorem~\ref{thm:pgroup} and Remark~\ref{rem:alpha} give a coefficient uniformly below $1/2$ over odd primes, while the error is $O(N/\log(N+2))$; for sufficiently large $N$ this leaves linear slack, and hence the same displayed upper bound follows. If $H$ has at least two nonabelian Sylow subgroups, the second part of Theorem~\ref{thm:nilpotent} gives
\[
\log_2a(H)\le\frac{\omega(H)}3+o(\omega(H)),
\]
again leaving linear slack. Thus in every case
\[
\log_2a(H)\le\frac N2+O\!\left(\sqrt N\,(\log(N+2))^3\right).
\]
The quasipolynomial extension term $O((\log(N+2))^4)$ is absorbed by the displayed error. Taking the supremum over all $G$ with $\omega(G)\le n$ gives the upper bound. Lemma~\ref{lem:extraspecial} gives
\[
\log_2h(n)\ge\frac n2-O(1),
\]
completing the proof.
\end{proof}

\section{Immediate consequences}

The main theorem has two short consequences that isolate complementary meanings of the same exponential constant.

\begin{corollary}[Least index of an abelian subgroup]\label{cor:index}
For a group $G$, put
\[
\iota(G)=\min\{[G:A]:A\le G\text{ abelian}\},
\]
and define
\[
i(n)=\sup\{\iota(G):\omega(G)\le n\}.
\]
Then, as $n\to\infty$,
\[
\log_2i(n)=\frac n2+o(n),
\]
equivalently $i(n)^{1/n}\to\sqrt2$.
\end{corollary}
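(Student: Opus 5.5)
The plan is to prove matching upper and lower bounds separately, comparing $\iota$ with $a$ in each direction.

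\emph{Lower bound.} If $A$ is abelian of index $\iota(G)$, then the conjugates of $A$ do not cover $G$ in general, so there is no easy inequality $a(G)\le\iota(G)$. Instead I would compute directly on the extremal examples of Lemma~\ref{lem:extraspecial}. In $E_m$, the image of any abelian subgroup $A$ in $V=E_m/Z(E_m)$ is isotropic, so it has dimension at most $m$. Hence $|A|\le|AZ(E_m)|\le 2^{1+m}$ and $[E_m:A]\ge 2^m$. Since $\omega(E_m)=2m+1$, taking $m=\lfloor(n-1)/2\rfloor$ gives $\log_2 i(n)\ge n/2-O(1)$.

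\emph{Upper bound, first step.} An abelian cover with $a(G)$ members contains a member $A$ of size at least $|G|/a(G)$, when $G$ is finite. By Lemma~\ref{lem:finite-reduction} we may reduce to a finite group, provided $\iota$ is suitably controlled under isoclinism. This needs care. For an arbitrary $G$ with $\omega(G)<\infty$, I would work with $A Z(G)$, which is still abelian. Its index equals the index of its image in the finite group $G/Z(G)$, and that image is an independent set in $\Delta_G$. So $\iota(G)$ is at most the index in $G/Z(G)$ of the largest independent set of $\Delta_G$ that is the image of an abelian subgroup containing $Z(G)$. A colour class of an optimal colouring of $\Delta_G$ generates an abelian subgroup, whose image $\bar A$ is a subgroup of $G/Z(G)$. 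By pigeonhole, some colour class covers at least $|G/Z(G)|/a(G)$ cosets. Therefore
\[
\iota(G)\le[G:AZ(G)]\le a(G),
\]
and this holds with no finiteness assumption on $G$ beyond that of $G/Z(G)$, which Neumann's theorem provides.

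\emph{Upper bound, conclusion.} From this, $i(n)\le h(n)$, and Theorem~\ref{thm:main} gives $\log_2 i(n)\le n/2+o(n)$. Combined with the lower bound, this yields $\log_2 i(n)=n/2+o(n)$ and $i(n)^{1/n}\to\sqrt2$.

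The step I expect to need the most attention is making the pigeonhole rigorous for infinite $G$ through $G/Z(G)$. In particular, one has to check that the union of the central cosets in a colour class lies inside a single abelian subgroup containing $Z(G)$. This follows from the last paragraph of the proof of Lemma~\ref{lem:finite-reduction}. The remainder is routine.
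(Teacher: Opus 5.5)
Your proof is correct and has the same structure as the paper's: the upper bound comes from $\iota(G)\le a(G)$, hence $i(n)\le h(n)$ and Theorem~\ref{thm:main}. The lower bound is the identical isotropic-dimension count in the extraspecial groups $E_m$ with $m=\lfloor (n-1)/2\rfloor$. The only difference is how you justify $\iota(G)\le a(G)$. You pigeonhole the colour classes of $\Delta_G$ in the finite quotient $G/Z(G)$, using Neumann's finiteness theorem and $a(G)=\chi(\Delta_G)$ from Lemma~\ref{lem:finite-reduction}. The paper instead quotes B.~H.~Neumann's lemma that if a group is covered by $k$ cosets of subgroups, one of those subgroups has index at most $k$. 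That route needs neither the passage to $G/Z(G)$ nor the isoclinism bookkeeping you flagged as delicate.
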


\begin{proof}
Neumann's covering lemma \cite{Neumann1954} states that if a group is covered by $k$ cosets of subgroups, then at least one of those subgroups has index at most $k$. An abelian cover of $G$ by $a(G)$ subgroups therefore contains an abelian subgroup $A$ with
\[
[G:A]\le a(G).
\]
Hence $i(n)\le h(n)$ and Theorem~\ref{thm:main} gives $\log_2i(n)\le n/2+o(n)$.

For the reverse inequality, let $E_m$ be an extraspecial $2$-group of order $2^{1+2m}$. If $A\le E_m$ is abelian, then $AZ(E_m)/Z(E_m)$ is isotropic in the $2m$-dimensional symplectic space $E_m/Z(E_m)$, and therefore has dimension at most $m$. Consequently $|A|\le2^{m+1}$ and $[E_m:A]\ge2^m$. By Lemma~\ref{lem:extraspecial}, $\omega(E_m)=2m+1$. Taking $m=\lfloor(n-1)/2\rfloor$ gives
\[
\log_2i(n)\ge\frac n2-O(1),
\]
which proves the claim.
\end{proof}

\begin{corollary}[Asymptotic extremality is concentrated in 2-groups]\label{cor:extremality}
Let $(G_r)$ be a sequence of finite groups, put $n_r=\omega(G_r)$, and suppose that $n_r\to\infty$ and
\[
\log_2a(G_r)=\frac{n_r}{2}-o(n_r).
\]
Put $H_r=C_{G_r}(G_r')$. Then
\[
\omega(H_r)=n_r-o(n_r).
\]
Moreover, for all sufficiently large $r$, $H_r$ has exactly one nonabelian Sylow subgroup, and that Sylow subgroup is a $2$-group.
\end{corollary}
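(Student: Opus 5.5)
The plan is to run the proof of Theorem~\ref{thm:main} in reverse, keeping track of where the linear-scale slack appears. Fix the sequence $(G_r)$ and put $H_r=C_{G_r}(G_r')$. By Theorem~\ref{thm:reduction},
\[
\log_2a(H_r)\ge\log_2a(G_r)-O\bigl((\log(n_r+2))^4\bigr)=\frac{n_r}{2}-o(n_r).
\]
Since $H_r\le G_r$, every pairwise noncommuting subset of $H_r$ is one of $G_r$, so $m_r:=\omega(H_r)\le n_r$. This gives the upper half of the first claim.

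For the lower half, apply Theorem~\ref{thm:nilpotent} to the nilpotent group $H_r$. The error there is uniform by Remark~\ref{rem:nilpotent-uniform}, so $\log_2a(H_r)\le m_r/2+\varepsilon(m_r)m_r+O(1)$ with $\varepsilon\to0$. If $m_r$ stayed bounded along a subsequence, then $a(H_r)$ would be bounded there, contradicting $\log_2 a(H_r)\ge n_r/2-o(n_r)\to\infty$. Hence $m_r\to\infty$. Because $m_r\le n_r$, we get $\varepsilon(m_r)m_r=o(n_r)$. Comparing with the lower bound yields
\[
\frac{n_r}{2}-o(n_r)\le\frac{m_r}{2}+o(n_r),
\]
so $m_r\ge n_r-o(n_r)$.

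For the structural claim, first note that $H_r$ is nonabelian for large $r$, since $a(H_r)\to\infty$. Suppose that along a subsequence $H_r$ has at least two nonabelian Sylow subgroups. The second part of Theorem~\ref{thm:nilpotent}, whose error is also uniform, would then give
\[
\log_2a(H_r)\le\frac{m_r}{3}+o(m_r)\le\frac{n_r}{3}+o(n_r),
\]
which contradicts $\log_2 a(H_r)\ge n_r/2-o(n_r)$ because $n_r\to\infty$. So for large $r$ there is exactly one nonabelian Sylow subgroup $P_r$ of odd or even order. As in the proof of Theorem~\ref{thm:nilpotent}, the abelian Sylow factors change neither invariant, so $a(H_r)=a(P_r)$ and $\omega(P_r)=m_r$.

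If $P_r$ were a $p_r$-group with $p_r$ odd, Theorem~\ref{thm:pgroup} would give $\log_2a(P_r)\le\alpha_{p_r}m_r+O(m_r/\log m_r)$. By Remark~\ref{rem:alpha}, $\sup_{p\text{ odd}}\alpha_p<1/2$: $\alpha_3<0.397$, and $\log_2p/p\le\log_25/5<0.47$ for $p\ge5$. This again contradicts the lower bound. Hence $P_r$ is a $2$-group for all large $r$.

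The main obstacle I expect is the uniformity bookkeeping. The errors have to be measured in $n_r$ rather than $m_r$, and one must show $m_r\to\infty$ before the $o(m_r)$ terms can be absorbed. Both points are handled above by Remark~\ref{rem:nilpotent-uniform} and by a separate bounded-subsequence argument.
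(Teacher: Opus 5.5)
Your proposal is correct and follows the paper's proof essentially step for step: you pass to $H_r$ via Theorem~\ref{thm:reduction}, use the uniform nilpotent bound to force $\omega(H_r)=n_r-o(n_r)$, rule out abelian $H_r$ and two nonabelian Sylow factors via the $n/3$ bound, and rule out odd primes via Theorem~\ref{thm:pgroup} and Remark~\ref{rem:alpha}. Your extra bookkeeping only makes explicit what the paper leaves implicit: the trivial bound $\omega(H_r)\le n_r$, uniformity of the error through Remark~\ref{rem:nilpotent-uniform}, and the explicit estimate $\sup_{p\text{ odd}}\alpha_p<0.47$.
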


\begin{proof}
By Theorem~\ref{thm:reduction},
\[
\log_2a(G_r)\le\log_2a(H_r)+O((\log(n_r+2))^4).
\]
Since $H_r$ is nilpotent, Theorem~\ref{thm:nilpotent} implies
\[
\log_2a(H_r)\le\frac{\omega(H_r)}2+o(\omega(H_r));
\]
hence asymptotic extremality forces $\omega(H_r)=n_r-o(n_r)$.

If $H_r$ had at least two nonabelian Sylow subgroups along an infinite subsequence, the second part of Theorem~\ref{thm:nilpotent} would give
\[
\log_2a(H_r)\le\frac{\omega(H_r)}3+o(\omega(H_r)),
\]
a contradiction. If $H_r$ were abelian along an infinite subsequence, then $a(H_r)=1$ and Theorem~\ref{thm:reduction} would give $\log_2a(G_r)=O((\log(n_r+2))^4)$, again a contradiction. Thus $H_r$ has exactly one nonabelian Sylow subgroup eventually.

Let it be a $p_r$-group $P_r$. Abelian Sylow factors do not change either $a$ or $\omega$. If $p_r$ were odd along an infinite subsequence, Theorem~\ref{thm:pgroup} and Remark~\ref{rem:alpha} would give a uniform coefficient strictly below $1/2$, again a contradiction. Hence $p_r=2$ eventually.
\end{proof}

\begin{remark}[Chromatic-versus-clique formulation]
Equivalently, for the noncommuting graphs themselves,
\[
\sup\{\chi(\Gamma_G):\omega(\Gamma_G)\le n\}=2^{n/2+o(n)}.
\]
Thus the theorem determines the sharp asymptotic chromatic-versus-clique growth within the class of noncommuting graphs. We do not use the term ``$\chi$-bounded class'' in its standard hereditary sense, because an induced subgraph of a noncommuting graph need not itself be the noncommuting graph of a group.
\end{remark}

\section*{Statements and Declarations}

\noindent\textbf{Funding.} No funds, grants, or other support were received during the preparation of this manuscript.

\medskip
\noindent\textbf{Competing interests.} The author has no relevant financial or non-financial interests to disclose.

\medskip
\noindent\textbf{Data availability.} No datasets were generated or analysed during the current study.

\medskip
\noindent\textbf{Use of artificial intelligence.} Generative artificial-intelligence tools were used as auxiliary tools for proof checking, consistency checks, and \TeX{} editing. All mathematical statements, arguments, citations, and the final manuscript were reviewed and remain the sole responsibility of the author.

\end{document}